\documentclass[11pt]{amsart}
\setlength{\textheight}{8.9in}
\setlength{\textwidth}{6.5in}
\hoffset=-0.8in
\voffset=-0.4in

\usepackage{latexsym}
\usepackage{amsfonts,amssymb,amsmath,amsthm,amscd}
\usepackage[mathscr]{eucal}
\usepackage{graphicx}

\newcommand{\g}{\mbox{${\mathfrak g}$}}

\newcommand{\kf}{\mbox{${\mathfrak k}$}}

\newcommand{\p}{\mbox{${\mathfrak p}$}}

\newcommand{\I}{\mbox{${\mathbb I}$}}

\newcommand{\PP}{\mbox{${\mathbb P}$}}

\newcommand{\R}{\mbox{${\mathbb R}$}}

\newcommand{\tr}{{\rm tr}}
\newcommand{\ric}{{\rm Ric}}

\newcommand{\vol}{{\rm vol}}

\makeatletter
\def\numberwithin#1#2{\@ifundefined{c@#1}{\@nocnterrr}{%
  \@ifundefined{c@#2}{\@nocnterr}{%
  \@addtoreset{#1}{#2}%
  \toks@\expandafter\expandafter\expandafter{\csname the#1\endcsname}%
  \expandafter\xdef\csname the#1\endcsname
    {\expandafter\noexpand\csname the#2\endcsname
     .\the\toks@}}}}
\makeatother
\numberwithin{equation}{section}

\newtheorem{thm}[equation]{Theorem}
\newtheorem{lemma}[equation]{Lemma}
\newtheorem{prop}[equation]{Proposition}

\newtheorem{ex}[equation]{Example}
\newenvironment{example}{\begin{ex} \em}{\end{ex}}
\newtheorem{rem}[equation]{Remark}
\newenvironment{rmk}{\begin{rem} \em}{\end{rem}}

\begin{document}

\title{A Hamiltonian approach to the cohomogeneity one Ricci soliton
equations}

\author{Alejandro Betancourt de la Parra}
\address{Mathematical Institute, Andrew Wiles Building,
   Oxford University, OX2 6GG, United Kingdom}
\email{betancourtde@maths.ox.ac.uk}
\thanks{A. Betancourt is supported by the Mexican Council of Science and Technology
(CONACyT)}
\author{Andrew S. Dancer}
\address{Jesus College, Oxford University, OX1 3DW, United Kingdom}
\email{dancer@maths.ox.ac.uk}

\author{McKenzie Y. Wang}
\address{Department of Mathematics and Statistics, McMaster
     University, Hamilton, Ontario, L8S 4K1, CANADA}
\email{wang@mcmaster.ca}
\thanks{M.Wang is partially supported by NSERC Grant No. OPG0009421}

\date{revised \today}

\begin{abstract}
We show how to view the equations for a cohomogeneity one Ricci soliton
as a Hamiltonian system with a constraint. We investigate conserved quantities
and superpotentials, and use this to find some explicit formulae for
Ricci solitons not of K\"ahler type in five dimensions.
\end{abstract}

\maketitle

\noindent{{\it Mathematics Subject Classification} (2000): 53C25, 53C30, 37J35}

\bigskip
\setcounter{section}{0}

\section{\bf Introduction}

Einstein metrics are the critical points of the Einstein-Hilbert action
restricted to the set of Riemannian metrics with fixed volume. In the
context of metrics of cohomogeneity one, this variational characterization
and the invariance under diffeomorphisms give rise to an interesting structure
for the Einstein equations--they can be written as a Hamiltonian system with a
constraint (the vanishing of the Hamiltonian). This structure has proved useful
in identifying conserved quantities for certain cases of the Einstein system,
and for finding superpotentials which define first order subsystems of the Einstein
equations (see \cite{DW1}, \cite{DW2}, \cite{DW6} for example). Frequently,
these quantities in turn lead to explicit solutions of the Einstein equations,
and the associated first order subsystems single out solutions with special holonomy.

In an analogous manner, Perelman's $\mathcal{F}$ and $\mathcal{W}$ functionals (\cite{Pe})
possess diffeomorphism and scale invariance properties, and consideration of
their first variations which preserve the dilaton measure leads to the gradient
Ricci soliton equations. In this paper we will investigate the cohomogeneity one
case using the framework of \cite{DW5} and put the gradient Ricci soliton equations
in Hamiltonian form with a constraint. We then focus on the case of steady solitons
and consider the situation in which the principal orbit has multiplicity free isotropy
representation.

Let us denote the Hamiltonian (to be constructed in \S 3) by $\mathscr H$. Recall that
a {\em generalised first integral} is a function on momentum phase space whose Poisson
bracket with the Hamiltonian lies in the ideal generated by $\mathscr H$. As well, a
{\em superpotential} is a $C^2$ function $f$ on configuration space that gives a time-independent
solution of the Hamilton-Jacobi equation, i.e., ${\mathscr H}(q, df_q) = 0$.
In the situation of the Bryant solitons, we find both generalised first integrals and
superpotentials when the dimension is either $2$ or $5$. This leads to
new explicit formulae for the Bryant soliton in dimension $5$. We also find superpotentials
for the gradient Ricci soliton equations on double warped products $\R^3 \times \Sigma$
where $\Sigma = S^2$ or $\R\PP^2$ and for certain complex line bundles over a product of
Fano K\"ahler-Einstein manifolds. The associated first order systems in the latter case
correspond to the K\"ahler condition, and explicit complete steady K\"ahler Ricci solitons
were obtained in \cite{DW5} (Theorem 4.20). Here we use the superpotentials in the former
case to obtain explicit complete steady gradient Ricci soliton structures on
$\R^3 \times \Sigma$. The existence of these non-K\"ahlerian solitons was obtained
previously using dynamical systems methods (cf \cite{Iv} and \cite{DW3}), but
{\em explicit} complete solutions had not, as far as we are aware, been found previously.

Finally, we mention that in \S 5 we prove a non-existence theorem (see Proposition \ref{nonnull})
for superpotentials of exponential type assuming a certain natural condition. We refer
the reader to \S 5 for the description of this condition. It is noteworthy, however, that
superpotentials satisfying this condition do exist in the Ricci-flat case and most of the
time when they occur, the associated first order systems are equivalent to the condition
that the metrics have special holonomy. So this non-existence result may be interpreted as
an indication of the greater rigidity of the soliton equation.

\section{\bf Basic Facts}

Let $(M, \bar{g})$ denote a connected Riemannian manifold of dimension
$n+1$ on which a compact Lie group $G$ acts via isometries with cohomogeneity one, i.e.,
with one-dimensional orbit space. We will assume that the orbit space is an interval
$I \subset \R$ and that the principal orbit type is given by $P:=G/K$, where $K$ is a closed
subgroup of $G$. We further assume that there is at least one special orbit, which, without loss of
generality, is of the form $G/H$ where $K \subset H$ and $H$ is closed in $G$. The
cohomogeneity one condition then implies that $H/K$ is diffeomorphic to a sphere $S^k$. By
choosing a constant speed geodesic that intersects one (hence all) principal orbits orthogonally,
we obtain a diffeomorphism of the open dense submanifold $M_0 \subset M$ consisting of all
the principal orbits with ${\rm int}\, I \times P$. Then the metric $\bar{g}$ takes the form
\begin{equation} \label{met-form}
\bar{g} = dt^2 + g_t
\end{equation}
where $t$ is the geodesic parameter and $g_t$ is a one-parameter family of $G$-invariant
metrics on $G/K$.

Let $L_t$ denote the shape operator of the hypersurface $\{t\}  \times P$, regarded
as a $g_t$-symmetric endomorphism of $T(G/K)$. Also, let $r_t$ denote the Ricci
endomorphism defined by $\ric(g_t)(X, Y) = g_t(r_t (X), Y))$, where $X, Y$ are tangent to
$G/K$. Then $\dot{g}_t = 2 g_t \circ L_t$, where each $g_t$ is regarded as an endomorphism
of $T(G/K)$ via the choice of a fixed $G$-invariant metric $Q$ on $G/K$. The Levi-Civita
connections of $\bar{g}$ (resp. $g_t$) will be denoted by $\overline \nabla$ (resp. $\nabla$).
The relative volume $v$ is defined by $d\mu_{g_t} = v(t)\, d\mu_Q$.

The static equation for a gradient Ricci soliton (GRS) $(M, \bar{g}, u)$ is
\begin{equation} \label{GRS}
    {\rm Ric}(\bar{g}) + {\rm Hess}_{\bar{g}} \,u + \frac{\epsilon}{2} \,\bar{g} = 0.
\end{equation}
$\bar{g}$ is called the {\em soliton metric} and $u: M \rightarrow \R$ is a smooth
function called the {\em soliton potential}. We shall say that a gradient Ricci soliton
is {\em trivial} if the soliton metric is Einstein. The example of the Gaussian soliton
shows that the potential could nevertheless be non-trivial.

In the cohomogeneity one situation,  Eq. (\ref{GRS})  becomes,
on $M_0 \approx {\rm int} \,I \times P$, the system
\begin{eqnarray}
r_t -\dot{L} - ({\rm tr}\, L -\dot{u})\, L + \frac{\epsilon}{2} \,\I &=& 0
\label{soleqS}, \\
 - {\rm tr} (L^2) -{\rm tr}\, (\dot{L}) + \ddot{u} + \frac{\epsilon}{2}&=& 0,
\label{soleqN} \\
d({\tr L}) + \delta^{\nabla} L &=& 0,
\label{soleq-mixed}
\end{eqnarray}
where the soliton potential $u$ is regarded both as a function on $M$ and as a function of $t$,
and $\delta^{\nabla}$ is the codifferential for $T^*(G/K)$-valued $1$-forms.

Note that Eq.(\ref{soleqS}) above represents the components of the gradient Ricci soliton equation
tangent to $G/K$, Eq.(\ref{soleqN}) is the equation in the $\partial/\partial t$ direction, and
Eq.(\ref{soleq-mixed}) represents the mixed directions. In fact, by $G$-invariance, $\tr \,L$ is
always constant in the $G/K$ directions, but we have included it in (\ref{soleq-mixed}) because
the above system also holds under suitable assumptions when $M$ is constructed out of an equidistant
family of hypersurfaces with possibly {\em no} symmetries (cf \cite{DW4}, Remark 2.18).
This possibility, incidentally, is a frequently misunderstood aspect of the work of the last two authors.

For the general GRS equation, there is a fundamental {\it conservation law} which was observed by R. Hamilton
\cite{Ham} and T. Ivey \cite{Iv}. In the cohomogeneity one setting, this conservation law becomes
\begin{equation} \label{cons1}
\ddot{u} +  ( - \dot{u} +{\rm tr}\, L )\,\dot{u} -\epsilon u = C
\end{equation}
where $C$ is a fixed constant. Using (\ref{soleqN}) and the trace of (\ref{soleqS}), we can
rewrite (\ref{cons1}) in the form
\begin{equation} \label{ham}
S + \tr(L^2)- (-\dot{u}+ \tr \,L)^2 + (n-1)\frac{\epsilon}{2} = C + \epsilon u,
\end{equation}
where $S$ is the scalar curvature of the principal orbits and $C$ is the same constant.
Recall that in \cite{DHW} we introduced the quantities
\begin{equation} \label{xi-def}
  \xi := -\dot{u} + \tr \, L, \  \ \  \ \ \  {\mathcal E} = C + \epsilon u,
\end{equation}
and rewrote the conservation law in the form
\begin{equation} \label{eqnE}
\ddot{\mathcal E} + \xi \dot{\mathcal E} - \epsilon {\mathcal E} = 0.
\end{equation}
Note that $\xi$ is a natural quantity to consider, as it is just the mean curvature of the
dilaton volume element $e^{-u} d\mu_{\bar{g}}$. Furthermore, for functions of the variable $t$,
the operator $\ddot{f} + \xi \dot{f}$ is just the $u$-Laplacian in the theory of metric
measure spaces.

Finally we recall that the scalar curvature of the metric $\bar{g}$ is given by
\begin{equation} \label{ambientR}
\bar{R} = -2\ \tr(\dot{L}) - \tr(L^2) - (\tr L)^2 + S.
\end{equation}
Using the trace of (\ref{soleqS}) followed by (\ref{ham}) we obtain
\begin{eqnarray}
 \bar{R} &=& -S + (\tr L)^2 - \tr(L^2) -2 \dot{u} \ \tr L - \epsilon n \label{ambientR1} \\
   &=& -C -\epsilon u - {\dot{u}}^2 - \frac{\epsilon}{2} (n+1),    \label{ambientR2}
\end{eqnarray}
which is just the cohomogeneity one case of Hamilton's identity (cf \cite{Ham} p. 84).

\section{\bf A Hamiltonian Formulation of the Cohomogeneity One GRS Equations.}

In this section we will construct a Hamiltonian on an appropriate symplectic manifold
such that integral curves of the associated Hamiltonian vector field lying on the zero
energy hypersurface correspond to solutions of the cohomogeneity one gradient Ricci soliton
equations modulo smoothness considerations. The Einstein case was discussed in \cite{DW1}
and the present case is essentially analogous.

Given a principal orbit $G/K$, we first fix an ${\rm Ad}_K$-invariant decomposition
$$\g = \kf \oplus \p$$
of the Lie algebra $\g$ of $G$, so that $\p \approx T_{[K]}(G/K)$. Recall that in \S 2 we have fixed
a background invariant metric $Q$ on $\p$.  Now let $\mathscr C$ be the {\it configuration space}
$S_{+}^2(\p)^K \times \R$ where $S_{+}^2(\p)^K$ denotes the space of all ${\rm Ad}(K)$-invariant,
positive-definite, symmetric endomorphisms of $\p$ with respect to $Q$. Via the relation
$g_t(X, Y) = Q(q_t(X), Y)$, a path $(q_t, u(t)) \in {\mathscr C}$ corresponds to a one-parameter
family of $G$-invariant metrics on $G/K$ together with a soliton potential function.

The {\it velocity phase space} is $T{\mathscr C} = (S_{+}^2(\p)^K \times \R) \times (S^2(\p)^K \times \R)$
and we will denote a typical element in it by $(q, u, \dot{q}, \dot{u})$. In order to write down
a suitable Lagrangian function, we introduce the following non-degenerate symmetric bilinear form
on ${\rm End}(\p) \times \R$:
\begin{equation} \label{sym-form}
  \langle (h_1, \eta_1), (h_2, \eta_2) \rangle := \frac{1}{2}\left(\tr(h_1)\,\tr(h_2) -\tr(h_1 h_2)\right)
       + 2 \eta_1 \eta_2 - \left(\tr(h_1)\,\eta_2 + \tr(h_2)\, \eta_1 \right).
\end{equation}
The induced symmetric bilinear form on ${\rm End}(\p)^* \times \R^*$ will be denoted by
$\langle \cdot, \cdot \rangle^*$. These forms are, {\em up to a minus sign}, extensions of the
symmetric bilinear forms on ${\rm End}(\p)$ and ${\rm End}(\p)^*$ introduced in \cite{DW1} (see
Eq. (1.15) there). As is easily checked, the extended forms also have Lorentz signature
$(-, \cdots, -, +)$.

Recall also that ${\rm Gl}(\p)$ acts on the left of ${\rm End}(\p)$ (by composition), and we
can extend this action  to ${\rm End}(\p) \times \R$ by making ${\rm Gl}(\p)$ act trivially on
$\R$. In particular,  for $(p, \phi) \in S^2(\p^*)^K \times \R^*$, $q \in S_{+}^2(\p)^K$, and
$(h, \eta) \in S^2(\p)^K \times \R$, we have
\begin{equation} \label{action}
(q^{-1} \cdot (p, \phi))(h, \eta) = p(q\cdot h) + \phi(\eta).
\end{equation}

We now introduce the Lagrangian
\begin{equation} \label{lagr}
{\mathscr L}(q, u, \dot{q}, \dot{u}) = e^{-u}v(q)\left( \tau \left(  \frac{1}{2}\langle q^{-1}\dot{q},
      q^{-1}\dot{q} \rangle + \dot{u}^2 - \dot{u}\, \tr(q^{-1}\dot{q}) + S(q) \right) + \lambda(u-n-1) + E \right)
\end{equation}
where $v(q)$ is the relative volume and $S(q)$ the scalar curvature associated to the metric $q$ on $G/K$.
The parameter $E$ is a Lagrange multiplier associated to the constant energy condition that will be
presently introduced, and the parameters $\tau$ and $\lambda$ will eventually be set equal  to $1$
and $-\epsilon$ respectively.

\begin{rmk} \label{W-func}
The above Lagrangian can be derived from Perelman's $\mathscr W$-functional (\cite{Pe})
$$ {\mathscr W}(\bar{g}, u, \tau) = \frac{1}{(4\pi \tau)^{N/2}} \int_M \, \left( \tau \left(\bar{R} + |\nabla u|^2 \right)
      - \epsilon( u - N) \right) e^{-u} d\mu_{\bar{g}}, $$
in which $\bar{R}$ is the scalar curvature of $\bar{g}$ and $N=\dim M$. Strictly speaking,
Perelman considered only the $\epsilon = -1$ case for compact $M$.  The above modification was
introduced for example in \cite{Cetc}, see p. 229.

In the situation where $M$ is a cohomogeneity one manifold, we write its dimension $N$ as $n+1$
and substitute Eqn. (\ref{ambientR}) into the above integral. Recall that $\tr \,L$ is the logarithmic
derivative of the relative volume $v$. Then, integrating by parts formally to
get rid of second derivative terms, one obtains (\ref{lagr}).  The usual constraint
$$ \frac{1}{(4 \pi \tau)^{N/2}} \int_M \, e^{-u} d\mu_{\bar{g}} = {\rm const}$$
is accounted for by the introduction of the multiplier $E$ above.  In order to treat the
cases of steady and expanding solitons simultaneously with the shrinking case, we have
further introduced the parameter $\lambda= -\epsilon$ and have suppressed the multiplicative factor
${\rm vol}(Q)/(4 \pi \tau)^{n+1}$ in $\mathscr W$. (Note that $d\mu_{\bar{g}} = v(q) \vol(Q) dt$.)
An important advantage of using the alternative Lagrangian (\ref{lagr}) as a starting point
in the Hamiltonian approach is that we can take $M$ to be {\em non-compact or even incomplete}.
\end{rmk}

The {\em momentum phase space} is the cotangent bundle
$T^* {\mathscr C} = (S_{+}^2(\p)^K \times \R) \times (S^2(\p^*)^K \times \R^*)$ equipped with
the canonical symplectic structure. A typical element of $T^* {\mathscr C}$ will be
denoted by $(q, u, p, \phi)$. The Legendre transformation is defined by the equations
\begin{eqnarray}
   p(h) &=& {\mathscr L}_{\dot{q}}(h) = e^{-u} v(q) \,\tau \left(\langle q^{-1}\dot{q}, q^{-1}h \rangle
         - \dot{u}\, \tr(q^{-1}h) \right),  \label{legen1}  \\
   \phi(\eta) &=& {\mathscr L}_{\dot{u}}(\eta) = e^{-u} v(q) \,\tau
           \left(2 \dot{u}\eta - \tr(q^{-1}\dot{q}) \eta \right).    \label{legen2}
\end{eqnarray}
The associated Hamiltonian is then given by
$${\mathscr H} = p(\dot{q}) + \phi(\dot{u}) - {\mathscr L}.$$
More explicitly, $\mathscr H$ can be written as
\begin{equation} \label{momentum-ham}
{\mathscr H} = \frac{e^{u}}{v(q)} \frac{\tau}{2} \langle q^{-1} \cdot (p, \phi), q^{-1} \cdot (p, \phi) \rangle^*
           + \frac{v(q)}{e^{u}} \left( -E + \lambda(n+1 -u) - \tau S(q) \right),
\end{equation}
where we have used the fact that the covector $q^{-1}\cdot (p, \phi)$ is dual to
$e^{-u}v(q)(q^{-1}\dot{q}, \dot{u})$ with respect to the symmetric bilinear form (\ref{sym-form}).
The above Hamiltonian should be compared to (1.9) in \cite{DW1}.
Equivalently, via the inverse Legendre transformation,
 we have
\begin{equation} \label{velocity-ham}
{\mathscr H} = v(q)e^{-u} \left( \tau \left(2 \langle L, L \rangle + \dot{u}^2 -2\dot{u}\, \tr L  \right)
      - E + \lambda(n+1 -u) - \tau S(q) \right).
\end{equation}
Since (\ref{sym-form}) specializes to $2 \langle L, L \rangle = (\tr L)^2 - \tr (L^2)$,
the {\em zero energy condition} can be rewritten as
\begin{equation} \label{0-energy}
\tr (L^2) - (-\dot{u} + \tr L)^2 + S + \frac{\lambda}{\tau} \,u = \frac{1}{\tau}(-E + \lambda (n+1)),
\end{equation}
which is just the conservation law (\ref{ham}) if we set $\tau =1$ and $\lambda = -\epsilon$.
The constant $C$ in (\ref{ham}) is then given by $-(E+ \frac{\epsilon}{2} (n+3))$.

\begin{rmk} \label{entropy}
In the case of a gradient Ricci soliton of cohomogeneity one, if we use Eq. (\ref{ambientR2})
together with the above relation between $E$ and $C$ in the $\mathscr W$-functional, we obtain
$$ {\mathcal W}(\bar{g}, u, 1) = \frac{1}{(4\pi)^{(n+1)/2}} \int_M \, \left(E + \epsilon(n+2)
         - 2\epsilon u \right) e^{-u} d\mu_{\bar{g}}. $$
Let us choose the normalization $ (4\pi)^{-(n+1)/2} \int_M \, e^{-u}  d\mu_{\bar{g}} = 1$.
For the steady case (i.e., $\epsilon = 0$) one can then interpret $E$ as Perelman's energy
${\mathcal F}(\bar{g}, u)$. For the shrinking case ($\epsilon < 0$), we obtain
$$ {\mathcal W}(\bar{g}, u, 1) = \left(E + \frac{\epsilon}{2} (n+2)\right) -
     \frac{2\epsilon}{(4\pi)^{(n+1)/2}} \int_M \, u e^{-u} d\mu_{\bar{g}} $$
where the last integral is the classical entropy.
\end{rmk}

As in \cite{DW1}, the geometric significance of the zero energy condition is given by

\begin{prop} \label{zeroE}
Assume that there is a singular orbit $G/H$ with dimension strictly smaller than that
of the principal orbits. An integral curve of the Hamiltonian vector field that corresponds $($under
the Legendre transformation $)$ to a $C^2$ Riemannian metric $\bar{g} = dt^2 + g_t$ and a
potential function $u(t)$ defined in an open neighbourhood of the singular orbit must actually lie
on the variety $\{{\mathscr H} = 0\}$.
\end{prop}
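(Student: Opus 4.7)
The plan is to exploit the fact that $\mathscr H$ in (\ref{momentum-ham}) has no explicit $t$-dependence, so the Hamiltonian flow conserves $\mathscr H$ along every integral curve $\gamma(t)$ of $X_{\mathscr H}$. Let $t_0$ be the value of the geodesic parameter corresponding to the singular orbit $G/H$; then $\gamma$ is parametrised over an interval having $t_0$ on its boundary. It therefore suffices to show $\mathscr H(\gamma(t))\to 0$ as $t\to t_0$, for conservation will then propagate the value $0$ throughout the interior of the interval and the curve will lie on $\{\mathscr H=0\}$.

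I would work from the velocity form (\ref{velocity-ham}) and show that the prefactor $v(q)\,e^{-u}$ defeats any growth in the bracketed factor. Write $k:=\dim(H/K)\geq 1$ for the dimension of the collapsing sphere and decompose $\p=\p_+\oplus\p_-$, with $\p_+\cong T_{[H]}(G/H)$ and $\p_-$ the $k$-dimensional collapsing summand. The $C^2$ condition on $\bar g$ at $G/H$ forces
$g_t|_{\p_-}=(t-t_0)^2\, Q|_{\p_-}$ plus higher order, while $g_t|_{\p_+}$ extends smoothly to $t_0$. Consequently
$$
v(q)=O\bigl((t-t_0)^k\bigr),\qquad \tr L=O\bigl((t-t_0)^{-1}\bigr),\qquad \tr(L^2)=O\bigl((t-t_0)^{-2}\bigr),
$$
and $L|_{\p_+}$ is bounded. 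Smoothness of $u$ as a $G$-invariant function on $M$, together with the usual parity condition at a collapsing orbit, forces $\dot u(t_0)=0$, so $\dot u = O(t-t_0)$, and hence $\dot u^2 \to 0$ and $\dot u\,\tr L=O(1)$.

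The main obstacle is that the individual terms $v\,(\tr L)^2$, $v\,\tr(L^2)$ and $v\,S(q)$ are only $O((t-t_0)^{k-2})$, which is insufficient when $k\in\{1,2\}$. The resolution is a cancellation coming from (\ref{ambientR}):
$$
(\tr L)^2 - \tr(L^2) - S(q) = -\bar R - 2\bigl(\tr\dot L + \tr(L^2)\bigr).
$$
Since $\bar g$ is $C^2$ near $G/H$, the Riemann tensor is $C^0$ there, so $\bar R$ is bounded; moreover the standard identity $\ric(\bar g)(\partial_t,\partial_t)=-\tr\dot L-\tr(L^2)$ shows $\tr\dot L+\tr(L^2)$ is bounded as well, being a component of the $C^0$ tensor $\ric(\bar g)$. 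Therefore $2\langle L,L\rangle-S(q)=(\tr L)^2-\tr(L^2)-S(q)$ stays bounded as $t\to t_0$. Combining with the preceding estimates, the bracketed factor in (\ref{velocity-ham}) is $O(1)$, so
$$
\bigl|\mathscr H(\gamma(t))\bigr|\;\leq\; v(q)\,e^{-u}\cdot O(1)\;=\;O\bigl((t-t_0)^k\bigr)\;\longrightarrow\;0,
$$
and together with conservation this forces $\mathscr H\equiv 0$ along $\gamma$, as required.
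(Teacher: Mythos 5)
Your proof is correct, and it shares the paper's skeleton --- the Hamiltonian is autonomous, hence constant along the integral curve, so it suffices to show that its value tends to $0$ as one approaches the singular orbit --- but the way you establish that limit is genuinely different. The paper disposes of the delicate terms by citing the proof of Lemma 1.10 of \cite{DW1}, which rests on a term-by-term Taylor expansion of $g_t$ near the collapsing orbit showing that the leading $O((t-t_0)^{-2})$ singularities of $(\tr L)^2 - \tr(L^2)$ and of $S(q)$ cancel; the only surviving issue is then the $v\,\dot u\,\tr L$ term, which is handled exactly as you handle it. You instead obtain the same cancellation intrinsically, by rewriting $2\langle L,L\rangle - S = (\tr L)^2 - \tr(L^2) - S = -\bar R - 2\bigl(\tr\dot L + \tr(L^2)\bigr) = -\bar R + 2\,\ric(\bar g)(\partial_t,\partial_t)$ and observing that both ambient curvature quantities are continuous, hence bounded, precisely because $\bar g$ is $C^2$ across the singular orbit. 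This makes the argument self-contained, avoids the external citation, and shows transparently where the $C^2$ hypothesis enters; the paper's route, by contrast, gives the sharper conclusion that each term of (\ref{velocity-ham}) separately tends to $0$, not merely that the bracket is $O(1)$. The one point worth making explicit is that the boundedness of $\dot u\,\tr L$ requires $\dot u = O(t-t_0)$ rather than just $\dot u(t_0)=0$, which needs $u$ to be $C^2$ (or at least $\dot u$ Lipschitz) together with the evenness of $u$ in $t-t_0$; this is implicit in the paper's phrase ``$\dot u\,\tr L$ tends to a finite constant'' as well, so you are assuming no more than the authors do.
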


\begin{proof} We may assume that the singular orbit is placed at $t=0$. The Hamiltonian is constant
along any integral curve. To evaluate the value of the constant, note that the smoothness
conditions imply that $v(0)=0, \dot{u}(0)=0,$ and $u(0)$ is finite. By the proof of Lemma 1.10
in \cite{DW1}, as $t$ tends to $0$, all terms on the right of (\ref{velocity-ham}) tend to $0$
except possibly the term involving $ v \dot{u}\, \tr L$. But $\dot{u}\ \tr L$ tends to a finite
constant, so in fact the remaining term also tends to $0$.
\end{proof}

Regarding the zero set ${\mathscr Z}_{\mathscr H}:=\{ {\mathscr H} = 0 \}$, we have

\begin{prop} \label{zeroset} When $\lambda \neq 0$ the variety ${\mathscr Z}_{\mathscr H}$
is a smooth hypersurface in momentum phase space. It is also smooth when $\lambda = 0$
and the principal orbit $G/K$ is not a torus; otherwise the possible singular points are of the form
$(q, u, p, \phi) = (q, u, 0, 0)$ where $q$ corresponds to a $G$-invariant flat metric on $G/K$.
\end{prop}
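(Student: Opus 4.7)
The plan is to decompose $\mathscr{H}$ as $\mathscr{H}=A+B$, where
$$A := \frac{e^{u}\tau}{2\, v(q)}\,\langle q^{-1}\!\cdot\!(p,\phi),\, q^{-1}\!\cdot\!(p,\phi)\rangle^{*}$$
is the kinetic term and
$$B := \frac{v(q)}{e^{u}}\bigl(-E + \lambda(n+1-u) - \tau S(q)\bigr),$$
and to test whether $d\mathscr{H}$ can vanish on $\mathscr{Z}_{\mathscr{H}}$. First I would dispense with the momentum directions. The bilinear form (\ref{sym-form}) is non-degenerate of Lorentz signature, and because it is $\Ad_{K}$-invariant, its restriction to the $K$-invariant subspace of $S^{2}(\p^{*})\times\R^{*}$ remains non-degenerate. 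Since $\partial_{p}\mathscr{H}$ and $\partial_{\phi}\mathscr{H}$ amount to pairing the covector $q^{-1}\!\cdot\!(p,\phi)$ against the admissible momentum variations via $\langle\,,\,\rangle^{*}$, their simultaneous vanishing forces $q^{-1}\!\cdot\!(p,\phi)=0$, hence $(p,\phi)=0$. Any singular point of $\mathscr{Z}_{\mathscr{H}}$ therefore lies in $\{(p,\phi)=0\}$, where the constraint $\mathscr{H}=0$ collapses to $B=0$.

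At such a point, $A$ and $\partial_{q}A$ vanish because $A$ is quadratic in $(p,\phi)$, while $\partial_{u}A = A = 0$. A direct calculation gives $\partial_{u}B = -B - \frac{v(q)\lambda}{e^{u}}$, which on $\{B=0\}$ reduces to $-\frac{v(q)\lambda}{e^{u}}$. Hence $\partial_{u}\mathscr{H} = -\frac{v(q)\lambda}{e^{u}}$ at the candidate singular points, and this is non-zero whenever $\lambda\ne 0$, proving smoothness in that case. If instead $\lambda=0$, the $u$-partial is uninformative, so I would compute $\partial_{q}\mathscr{H}$. The relation $B=0$ with $\lambda=0$ forces $-E-\tau S(q)=0$, which kills the $\partial_{q}v$ contribution in $\partial_{q}B$, leaving $\partial_{q}\mathscr{H}\cdot h = -\frac{\tau v(q)}{e^{u}}\,\partial_{q}S(q)\cdot h$ for every $h\in S^{2}(\p)^{K}$. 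The problem then reduces to showing that $\partial_{q}S(q)=0$ on $S^{2}(\p)^{K}$ only occurs when $G/K$ is a torus and $q$ is a flat invariant metric.

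For this reduction, apply Euler's identity to the $(-1)$-homogeneous scaling $S(cq)=c^{-1}S(q)$: differentiation at $c=1$ yields $\partial_{q}S(q)\cdot q = -S(q)$, so $\partial_{q}S=0$ already implies $S(q)=0$. Next, the product $q\mapsto S(q)v(q)$ is, up to a positive normalization, the Einstein--Hilbert functional $\int_{G/K} S\, d\mu_{q}$ restricted to the finite-dimensional space of $G$-invariant metrics, whose critical points are precisely the invariant Ricci-flat metrics. Since $\partial_{q}(Sv) = (\partial_{q}S)\,v + S\,\partial_{q}v$ and both summands vanish, $q$ is Ricci-flat. The Alekseevsky--Kimelfeld theorem then forces $q$ to be flat, and because $G/K$ is a compact Riemannian manifold homogeneous under a compact group, compact homogeneous flat manifolds are flat tori. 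This identifies the possible singular points in the form stated.

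The principal obstacle is this last reduction: $\partial_{q}S=0$ on its own only produces scalar-flatness, and it is the combination of Euler's identity with the variational characterization of Ricci-flat invariant metrics as critical points of $Sv$ that promotes scalar-flatness to full Ricci-flatness, after which the Alekseevsky--Kimelfeld theorem identifies $G/K$ as a torus.
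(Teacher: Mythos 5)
Your proposal is correct and follows essentially the same route as the paper: non-degeneracy of the Lorentzian form kills the momentum directions, the $u$-partial reduces to $-\lambda v e^{-u}$ on the zero set, and for $\lambda=0$ the $q$-partial reduces to $(dS)_q$, whose vanishing forces a Ricci-flat (hence flat, hence toral) invariant metric. The only difference is that you prove the last reduction inline (Euler's identity for the $(-1)$-homogeneity of $S$ plus symmetric criticality of the total scalar curvature and Alekseevsky--Kimelfeld), whereas the paper delegates it to Proposition 1.15 of \cite{DW1}.
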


\begin{proof} We need to examine the differential $d{\mathscr H}$ at points $(q, u, p, \phi)$
in ${\mathscr Z}_{\mathscr H}$. The partial derivative
$$ {\mathscr H}_{(p, \phi)}(\alpha, \beta) = \frac{e^u}{v}\, \tau \,\langle q^{-1}\cdot(p, \phi),
         q^{-1}\cdot(\alpha, \beta) \rangle^*.$$
Since $\langle \, \, , \, \rangle$ is non-degenerate, the above partial derivative vanishes
(for all $(\alpha, \beta)$) iff $(p, \phi) = (0, 0)$.  The vanishing of $\mathscr H$ now implies
that $E = -(\tau S + \lambda (u -n-1))$. Using these two facts in the partial derivative of
${\mathscr H}$ with respect to $u$ we obtain
$$ {\mathscr H}_u (\eta) = -\lambda \,\eta \, v e^{-u},$$
which vanishes (for all $\eta$) only if $\lambda = 0$. This gives the first statement of the Proposition.

If $\lambda = 0$, using $(p, \phi) = (0, 0)$, we obtain
$$ {\mathscr H}_q (h) = e^{-u} dv_q(h) (-E + \lambda(n+1 -u) - \tau S) - e^{-u} v (\tau (dS)_q(h)). $$
Since the first term vanishes by the zero energy condition, the vanishing of ${\mathscr H}_q$
reduces to the vanishing of $(dS)_q$. As in the proof of Proposition 1.15 in \cite{DW1},
we conclude that $q$ is a Ricci-flat $G$-invariant metric on the principal orbit. It is well-known
that the principal orbit must then be a torus.
\end{proof}

Since the solution curves of the Euler-Lagrange equation for $\mathscr L$ correspond
to the integral curves of the canonical equations for $\mathscr H$, we proceed to
determine explicitly the components of the Euler-Lagrange equation and show that they
yield (\ref{soleqS}) and (\ref{soleqN}) if one further assumes the zero energy condition.

\medskip

We begin with
$$ {\mathscr L}_{(\dot{q}, \dot{u})} (h, \eta) = \tau v e^{-u} \left( 2\dot{u} \eta
       - 2 (\tr L)\,\eta + 2 \langle L, q^{-1}h \rangle - \dot{u}\, \tr(g^{-1}h) \right). $$
It follows that
\begin{eqnarray*}
\frac{d}{dt} {\mathscr L}_{(\dot{q}, \dot{u})} (h, \eta) &=& \tau v e^{-u} \left[
          (-\dot{u} + \tr L)(2 \eta \left(\dot{u} - \tr L) + 2\langle L, q^{-1}h \rangle - \dot{u}\, \tr(q^{-1}h)\right) \right.\\
      &  &  \left. + 2\eta(\ddot{u} - \tr(\dot{L})) + 2 \langle \dot{L}, q^{-1}h \rangle
             -2 \langle L, q^{-1}\dot{q}q^{-1} h \rangle - \ddot{u}\, \tr(q^{-1}h) +
             \dot{u}\, \tr(q^{-1}\dot{q}q^{-1} h)  \right].
\end{eqnarray*}
We also have
\begin{eqnarray*}
{\mathscr L}_{(q, u)} (h, \eta) &=& v e^{-u} \left( \frac{1}{n-1} \langle I, q^{-1}h \rangle - \eta \right) \left[
       \tau \left(2 \langle L, L \rangle + \dot{u}^2  -2 \dot{u}\, \tr L + S \right) + \lambda(u-n-1) + E \right] \\
     &  & + v e^{-u} \left[ \lambda \eta + \tau \left( 2 \langle q^{-1}h, r_q \rangle - \tr(q^{-1}h) S
           + 2 \dot{u} \, \tr(q^{-1}h L) - 4 \langle L, q^{-1}h L \rangle  \right)\right],
\end{eqnarray*}
where we have used Lemma 1.12 in \cite{DW1} and $r_q$ denotes the Ricci endomorphism of the metric $q$.

Now the Euler-Lagrange equation
$$\frac{d}{dt} {\mathscr L}_{(\dot{q}, \dot{u})} (h, \eta) = {\mathscr L}_{(q, u)} (h, \eta)$$
must hold for all $h \in S^2(\p)^{K}$ and all $\eta \in \R$. Setting $h=0$ and simplifying, we obtain
the equation
\begin{equation} \label{presoleqN}
2 \ddot{u} -2 \tr(\dot{L}) - (-\dot{u} + \tr L)^2 - \tr(L^2) + S + \frac{\lambda}{\tau} u
         = \frac{-E + \lambda(n+2)}{\tau}.
\end{equation}
If we then apply the zero energy condition (\ref{0-energy}), we obtain
\begin{equation} \label{altsoleqN}
     \ddot{u} - \tr(\dot{L}) - \tr(L^2) = \frac{\lambda}{2\tau},
\end{equation}
which becomes Eq. (\ref{soleqN}) if we set $\tau=1$ and $\lambda = -\epsilon$.

If instead we set $\eta = 0$ in the Euler-Lagrange equation, then after some amount of simplification
we obtain the equation
\begin{equation} \label{presoleqS}
 \dot{L} + (\tr L)L - \dot{u}\, L - r = \left( 2 \langle L, L \rangle + 2 \ddot{u} - \dot{u}^2
      -S + \frac{\lambda(u -n-1) + E}{\tau}  \right) \frac{I}{2(n-1)}.
\end{equation}
Taking the trace of (\ref{presoleqS}) yields
\begin{equation*}
\tr(\dot{L}) + (\tr L)^2 - \dot{u}\, (\tr L) - S = \frac{n}{2(n-1)} \left( (\tr L)^2 - \tr(L^2)
         + 2 \ddot{u} - \dot{u}^2 -S + \frac{\lambda(u-n-1) + E}{\tau} \right).
\end{equation*}
If we substitute (\ref{altsoleqN}) and the zero energy condition (\ref{0-energy}) into this
equation, then after some simplication we deduce
$$ 2 \ddot{u} - \dot{u}^2 - S + (\tr L)^2 - \tr(L^2) + \frac{\lambda}{\tau} u = \frac{-E + 2 \lambda}{\tau}. $$
Substituting this equation into (\ref{presoleqS}) gives
\begin{equation} \label{altsoleqS}
 \dot{L} + (\tr L)L - \dot{u} L -r = -\left(\frac{\lambda}{2\tau}\right) I,
\end{equation}
which becomes Eq. (\ref{soleqS}) if we set $\tau=1$ and $\lambda = -\epsilon.$
We have therefore deduced

\begin{thm} \label{hamform}
Given a principal orbit $G/K$ where $G$ is a compact Lie group and $K$ a closed subgroup,
consider on the symplectic manifold $(S_{+}^2(\p)^K \times \R) \times (S^2(\p^*)^K \times \R^*)$
the Hamiltonian ${\mathscr H}$ given by $($\ref{momentum-ham}$)$ with $\tau=1$ and
$\lambda=-\epsilon$. The integral curves of  ${\mathscr H}$ lying in in the variety
$\{{\mathscr H} = 0\}$ correspond $($under the inverse Legendre transformation$)$ to
solutions of the non-mixed parts of the cohomogeneity one gradient Ricci soliton equations
$($\ref{soleqS}$)$ and $($\ref{soleqN}$)$.   \hspace{5cm}$\Box$
\end{thm}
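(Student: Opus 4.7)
The plan is to invoke the standard correspondence between Hamiltonian dynamics and the Euler--Lagrange calculus of variations, together with the explicit Legendre transform (\ref{legen1})--(\ref{legen2}). Under the inverse Legendre map, integral curves of the Hamiltonian vector field associated with $\mathscr H$ project precisely to extremals of the Lagrangian $\mathscr L$ in (\ref{lagr}). It therefore suffices to check that the Euler--Lagrange system for $\mathscr L$, restricted to the zero energy locus $\{{\mathscr H}=0\}$, reduces to the non-mixed soliton equations (\ref{soleqS}) and (\ref{soleqN}) once $\tau = 1$ and $\lambda = -\epsilon$.

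First I would translate the zero energy condition using the velocity form (\ref{velocity-ham}) of $\mathscr H$; a direct computation gives (\ref{0-energy}), which for the specialised values of $\tau$ and $\lambda$ is just the Hamilton--Ivey conservation law (\ref{ham}), so the geometric constraint on the soliton side is matched exactly by the zero energy hypersurface. Next, I would unpack the Euler--Lagrange equation by testing against the two natural types of variations: setting $h = 0$ and letting $\eta \in \R$ vary yields the scalar identity (\ref{presoleqN}), while setting $\eta = 0$ and letting $h \in S^2(\p)^K$ vary yields the tensorial identity (\ref{presoleqS}). Feeding (\ref{0-energy}) into (\ref{presoleqN}) collapses it to (\ref{altsoleqN}), which specialises to (\ref{soleqN}); tracing (\ref{presoleqS}) and reinjecting (\ref{altsoleqN}) together with the zero energy constraint collapses the trace-scalar multiple on the right-hand side to $-\lambda/(2\tau)$, giving (\ref{altsoleqS}) and hence (\ref{soleqS}).

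The main technical obstacle is the bookkeeping forced by the Lorentzian bilinear form (\ref{sym-form}): because of the mixed terms $\dot{u}\,\tr(q^{-1}h)$ and $\tr(q^{-1}\dot{q})\,\eta$, the Euler--Lagrange equations do not cleanly separate into a purely tensorial and a purely scalar part, and spurious scalar terms proportional to $\langle I, q^{-1}h\rangle$ appear in the raw tensorial equation (\ref{presoleqS}). Eliminating them is precisely what requires one to simultaneously invoke the scalar Euler--Lagrange equation and the zero energy condition when tracing; any shortcut here would leave residual terms incompatible with (\ref{soleqS}). Once this coupling is handled, however, the verification is an assembly of the identities already computed in the paragraphs immediately preceding the theorem, so no fundamentally new analytic input is needed beyond the careful use of Lemma 1.12 of \cite{DW1} (for the variation of $v(q)$) and the fact that the mixed equation (\ref{soleq-mixed}) need not arise, as it corresponds to the infinitesimal gauge freedom that has already been fixed by the geodesic slicing.
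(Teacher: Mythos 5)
Your proposal is correct and follows essentially the same route as the paper: pass to the Euler--Lagrange system via the Legendre transform, test against the variations $h=0$ and $\eta=0$ to obtain (\ref{presoleqN}) and (\ref{presoleqS}), and then use the zero energy condition (\ref{0-energy}) together with the traced tensorial equation to collapse these to (\ref{altsoleqN}) and (\ref{altsoleqS}), i.e.\ to (\ref{soleqN}) and (\ref{soleqS}) for $\tau=1$, $\lambda=-\epsilon$. The observations about the Lorentzian cross-terms, Lemma 1.12 of \cite{DW1}, and the exclusion of the mixed equation (\ref{soleq-mixed}) all match the paper's treatment.
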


Recall that by Proposition 3.19 in \cite{DW5}, as long as there is a singular orbit
of dimension strictly less than that of the principal orbit and we can further establish
$C^3$ regularity of the metric $\bar{g} = dt^2 + g_t$ and potential $u$, then the mixed
parts of the cohomogeneity one GRS equation automatically hold. In this sense the
cohomogeneity one GRS equation can be viewed as a constrained Hamiltonian system.

\medskip

In the remainder of this section we will derive a more explicit form of the Hamiltonian
$\mathscr H$ in the special case where the isotropy representation of $G/K$ splits into
pairwise inequivalent irreducible $\R$-subrepresentations.

To this end let us write
\begin{equation} \label{p-decomp}
 \p = \p_1 \oplus \cdots \oplus \p_r
\end{equation}
for the decomposition of $\p$ into ${\rm Ad}(K)$-irreducible $Q$-orthogonal summands
and let $d_i = \dim_{\R} \p_i$, so that $n = \sum_i d_i$. We will abuse notation and
denote the metric endomorphism $q \in S^2_{+}(\p)^{K}$ by the diagonal operator
${\rm diag}(e^{q_1} I_{d_1}, \cdots, e^{q_r} I_{d_r})$ where $I_{d_i}$ is the
identity operator in ${\rm End}(\p_i)$. In other words, via these new coordinates,
we have a diffeomorphism $S^2_{+}(\p)^{K} \approx \R^r$ which in turn induces a canonical
transformation of $T^*{\mathscr C}$, leaving the remaining variables $u, \phi$ unchanged.
By abuse of notation we shall let $p_i$ denote the new conjugate momenta. It will be
useful to let $q, p, d$ denote the vectors in $\R^r$ whose coordinates are respectively
$q_i, p_i, d_i$.

Then, as in the Einstein case, we have $v=\exp(\frac{1}{2} d \cdot q)$,
$$ \tr L = \frac{1}{2} \,d \cdot \dot{q}, \,\,\,\,\, \tr(L^2) = \frac{1}{4} \sum_i \, d_i \,\dot{q}_i^2,$$
and
$$ S = \sum_{w \in {\mathcal W}} \,\, A_w \, e^{w \cdot q} $$
for a finite subset ${\mathcal W} \subset \R^r$ of weight vectors and nonzero real constants $A_w$
which depend only on $G/K$. (See \S 1 in \cite{DW6} for further information about $\mathcal W$ and $A_w$.)

With the above change of coordinates, the Lagrangian (\ref{lagr}) becomes
$${\mathscr L} = e^{-u+\frac{1}{2} d\cdot q} \left( \frac{1}{4}(d \cdot {\dot q})^2 - \frac{1}{4} \sum_i \, d_i {\dot q}_i^2
         + \dot{u}^2 - \dot{u}\, (d \cdot \dot{q}) + \sum_{w\in {\mathcal W}} \, A_w  e^{w \cdot q} + E \right).$$

The Legendre transformation is now given by
$$ p_j = {\mathscr L}_{\dot{q}_j} =  e^{-u+\frac{1}{2} d\cdot q} \left( \frac{1}{2}\,(d \cdot {\dot q})\,d_j
           - \frac{1}{2}\, d_j \,\dot{q}_j  - \dot{u}\, d_j \right)$$
and
$$ \phi = {\mathscr L}_{\dot{u}} = e^{-u+\frac{1}{2} d\cdot q} (2 \dot{u} - d\cdot {\dot q}).$$

Using the above equations, we easily deduce that
$$ \frac{1}{2}\, d \cdot \dot{q} = -\left( \sum_j \, p_j + \frac{n}{2} \phi \right) e^{u-\frac{1}{2} d\cdot q}, $$
$$ \dot{u} = -\left( \sum_j \, p_j  + \frac{n-1}{2} \phi \right)e^{u-\frac{1}{2} d\cdot q},$$
$$ \frac{1}{4} \sum_i \, d_i \dot{q}_i^2 = \left(\sum_i \, \frac{p_i^2}{d_i}
      + \phi \sum_i \, p_i + \frac{n}{4} \phi^2 \right) e^{2u-d\cdot q}  .$$

Substituting the above relations in ${\mathscr H} = p(\dot{q}) + \phi(\dot{u}) - {\mathscr L}$
we obtain an explicit formula for $\mathscr H$ in the multiplicity free case:

\begin{prop} \label{ham-0mult}
Let $G/K$ be a principal orbit whose isotropy representation splits into pairwise inequivalent
irreducible summands. In terms of the exponential coordinates introduced above, the Hamiltonian
$($\ref{momentum-ham}$)$  takes the form
\[ {\mathscr H} =  - \frac{e^{u-\frac{1}{2} d\cdot q}}{\tau} \left(\sum_i \, \frac{p_i^2}{d_i} +
           \phi \sum_i \, p_i  + \frac{(n-1)}{4} \phi^2 \right)
       +  e^{-u+\frac{1}{2} d\cdot q} \left(- E + \lambda(n+1 -u)
       -  \tau \sum_{w \in {\mathcal W}} \, A_w e^{w \cdot q} \right).
\]
\end{prop}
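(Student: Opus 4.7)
The plan is to substitute the Legendre transformation and its inverse (displayed just before the proposition, with the understanding that keeping general $\tau$ multiplies both momenta by $\tau$ and introduces a compensating factor of $\tau^{-1}$ upon inversion) directly into $\mathscr{H} = p(\dot{q}) + \phi\,\dot{u} - \mathscr{L}$, and then simplify algebraically.

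First, I would split $\mathscr{L}$ into velocity-quadratic and velocity-free parts by writing $\mathscr{L} = \tau e^{-u+\frac{1}{2}d\cdot q}\,T + \mathscr{L}_{\mathrm{pot}}$, where
\[
T := \tfrac14(d\cdot\dot q)^2 - \tfrac14\sum_i d_i \dot q_i^{\,2} + \dot u^2 - \dot u\,(d\cdot\dot q),
\]
\[
\mathscr{L}_{\mathrm{pot}} := e^{-u+\frac{1}{2}d\cdot q}\Bigl(\tau\sum_{w\in\mathcal{W}} A_w\, e^{w\cdot q} + \lambda(u-n-1) + E\Bigr).
\]
Since the kinetic piece is homogeneous of degree two in $(\dot q,\dot u)$, Euler's identity gives $p(\dot q) + \phi\,\dot u = 2\tau e^{-u+\frac{1}{2} d\cdot q} T$, so $\mathscr{H} = \tau e^{-u+\frac{1}{2} d\cdot q} T - \mathscr{L}_{\mathrm{pot}}$. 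The contribution $-\mathscr{L}_{\mathrm{pot}}$ manifestly yields the second group of terms in the proposition, and only the kinetic part needs to be re-expressed in momentum variables.

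Second, set $s := \sum_j p_j$ and substitute the inversion identities already recorded in the excerpt (now carrying an overall $\tau^{-1}$) for $\tfrac12 d\cdot\dot q$, $\dot u$, and $\tfrac14\sum_i d_i \dot q_i^{\,2}$ into $T$. The contributions of $\tfrac14(d\cdot\dot q)^2$, $\dot u^2$, and $-\dot u\,(d\cdot\dot q)$ assemble into
\[
\tau^{-2}e^{2u-d\cdot q}\Bigl[\bigl(s+\tfrac{n}{2}\phi\bigr)^2 + \bigl(s+\tfrac{n-1}{2}\phi\bigr)^2 - 2\bigl(s+\tfrac{n-1}{2}\phi\bigr)\bigl(s+\tfrac{n}{2}\phi\bigr)\Bigr].
\]
The key algebraic observation is that the bracket is of the form $a^2+b^2-2ab=(a-b)^2$ with $a-b=\phi/2$, so it collapses to $\phi^2/4$, eliminating \emph{all} $s^2$ and $s\phi$ cross-terms. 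After subtracting the $\tfrac14\sum d_i\dot q_i^{\,2}$ piece and multiplying through by $\tau e^{-u+\frac{1}{2} d\cdot q}$, the kinetic part of $\mathscr{H}$ becomes
\[
-\frac{e^{u-\frac{1}{2}d\cdot q}}{\tau}\Bigl(\sum_i \frac{p_i^{\,2}}{d_i} + \phi\sum_i p_i + \frac{n-1}{4}\phi^2\Bigr),
\]
which is precisely the first term of the claimed formula.

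The only nontrivial moment is the $(a-b)^2$ collapse, which is what erases the $s$-dependent cross-terms and leaves only a residual $\phi^2$ contribution to compete with $\tfrac14\sum d_i \dot q_i^{\,2}$. Everything else is mechanical substitution and bookkeeping of the factor $\tau$, so I do not expect any genuine obstacle.
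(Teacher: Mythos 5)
Your proposal is correct and follows essentially the same route as the paper, namely direct substitution of the displayed inversion identities into $\mathscr{H}=p(\dot q)+\phi\,\dot u-\mathscr{L}$ (the paper's proof is exactly this substitution, stated without the intermediate algebra). Your use of Euler's identity for the degree-two kinetic term and the $(a-b)^2=\phi^2/4$ collapse is just a clean organization of the same computation, and your $\tau$-bookkeeping is consistent with the general form of the Hamiltonian in the proposition.
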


\begin{rmk} The above proposition is the analogue of Proposition 2.5 in \cite{DW1}. The quadratic
form
$$ J(p, \phi) := - \sum_i \, \frac{p_i^2}{d_i} - \phi \sum_i \, p_i  - \frac{(n-1)}{4}\, \phi^2 $$
assumes a different form from its analogue in \cite{DW1}, but it is still of Lorentz signature
$(1, r)$. It is negative definite on the hyperplane $\phi=0$, but is positive on the
vector $(d, -2)$ (in fact $J(d, -2) =1$.)

The associated bilinear form is
\begin{equation} \label{bilnform}
-\left( \sum_{i=1}^{r} \frac{p_i p_i^\prime}{d_i} + \frac{\phi}{2}
\sum_{i=1}^{r} \,p_i^\prime + \frac{\phi^\prime}{2} \sum_{i=1}^{r} \,p_i
+ \frac{n-1}{4} \phi \,\phi^\prime \right).
\end{equation}
\end{rmk}

It is sometimes convenient to rewrite the Hamiltonian using the extended vectors
\[
{\bf d} := (d, -2) = (d_1, \ldots, d_r, -2),
\]
\[
{\bf q } := (q,u) = (q_1, \ldots, q_r, u),
\]
\[
{\bf p} := (p, \phi) = (p_1, \ldots, p_r, \phi).
\]
Similarly the weight vectors $w$ can be extended to ${\bf w} = (w,0)$, and we will
continue to use $\mathcal W$ to denote the set of extended vectors in $\R^{r+1}$.
The Hamiltonian now becomes
\[
 {\mathscr H} =   \frac{e^{-\frac{1}{2} {\bf d} \cdot {\bf q}}}{\tau} J({\bf p})
       +  e^{\frac{1}{2} {\bf d} \cdot {\bf q}} \left( - E + \lambda(n+1 -u)
       -  \tau \sum_{w \in {\mathcal W}} \, A_w e^{{\bf w} \cdot {\bf q}} \right)
\]
where, as above,
\[
J({\bf p})=
-\left(\sum_i \, \frac{p_i^2}{d_i} +
           \phi \sum_i \, p_i  + \frac{(n-1)}{4} \phi^2 \right).
\]
In the steady case $(\lambda=0$), if we also set $\tau=1$, we get
\begin{equation}
{\mathscr H} =   e^{-\frac{1}{2} {\bf d} \cdot {\bf q}} J({\bf p})
       -  e^{\frac{1}{2} {\bf d} \cdot {\bf q}}  \left( E
       +  \sum_{w \in {\mathcal W}} \, A_w e^{{\bf w} \cdot {\bf q}} \right). \label{ham-steady}
\end{equation}
It is also often useful to enlarge the set $\mathcal W$
to $\tilde{\mathcal W} = {\mathcal W} \cup \{0\}$, so that the
final bracket in Eq. (\ref{ham-steady}) is viewed as a sum of exponentials over
$\tilde{\mathcal W}$, with $E$ playing the role of $A_0$.

\section{\bf Conserved quantities}

As in the Einstein case we can look for quantities
$F$ which are {\em generalised first integrals}, in the sense that
\begin{equation}\label{conseq}
\{ F, {\mathscr H} \} = \Phi \mathscr H
\end{equation}
so that for the soliton equations, that is, the Hamiltonian
flow in the variety ${\mathscr H}=0$, the quantity $F$ is conserved.
Functions in the ideal generated by $\mathscr H$ will be referred to as
trivial generalised first integrals.

We look for solutions to (\ref{conseq})
of the form
\[
F = \sum_{\bf b} F_{\bf b}\, e^{{\bf b} \cdot {\bf q}} \;\; : \;\;
\Phi = \sum_{\bf b} \Phi_{\bf b}\, e^{{\bf b} \cdot {\bf q}}
\]
where $F_{\bf b}$ and $\Phi_{\bf b}$ are polynomials in $\bf p$. In view
of this we shall in this section
denote by $\nabla_{\bf p}$ the gradient operator in
the momentum variables whenever the possibility of confusion may occur.
In cases where the operator is applied to functions such as $J, F$ and
$\Phi$ which depend solely on ${\bf p}$ we shall suppress this subscript.

Substituting these into (\ref{conseq}), and setting $\psi := \Phi -
\frac{1}{2}\, {\bf d} \cdot \nabla_{\bf p} F$  we obtain, upon using
(\ref{ham-steady}),  the recursion relation
\begin{equation} \label{recursion}
({\bf b} \cdot \nabla J) F_{\bf b} - \psi_{\bf b} J =  -E(\psi_{{\bf b} - {\bf d}}
+ {\bf d} \cdot \nabla F_{{\bf b} -{\bf d}})
-\sum_{w} A_w (\psi_{{\bf b} - {\bf d} -{\bf w}} +({\bf d}+{\bf w})\cdot \nabla
F_{{\bf b} -{\bf d} -{\bf w}}).
\end{equation}

We adopt a similar strategy as in \cite{DW1} to look for nontrivial
generalised first integrals, starting with a seed level ${\bf c}$
where there is a factorisation
\[
J = ({\bf c} \cdot \nabla J)  \theta
\]
so we may obtain a nontrivial solution to the recursion
(i.e. one where we do not have $F_{\bf c} = JG$ and
$\psi_{\bf c} = ({\bf c}. \nabla J)G$) at this level by setting
\[
F_{\bf c} = \theta \psi_{\bf c}.
\]

Let us consider the Bryant soliton where the hypersurface is just the
sphere $S^n$, viewed as the isotropy irreducible space $SO(n+1)/SO(n)$.
(In the literature, using this particularly simple form of the sphere in
the cohomogeneity one ansatz is referred to as the {\em rotationally symmetric} case.)
We write the soliton metric as
$$ \bar{g} = dt^2 + h(t)^2 g_1$$
where $g_1$ is the constant curvature one metric on $S^n$. The scalar curvature
function for the sphere is $n(n-1) e^{-q_1}$, where we set $h(t)^2 = e^{q_1}$.
%Now the (extended) vectors in $\tilde{\mathcal W}$ are
%$(0,0)$ and $(-1,0)$.
Writing $q$ for $q_1$, we have:
\[
{\bf d} = (n, -2), \;\; {\bf q} = (q, u), \;\; {\bf p} = (p, \phi),
\]
\[
\tilde{\mathcal W} = \{ (0,0), (-1,0) \}
\]
(for $n > 1$),
and
\[
J = -\left( \frac{p^2}{n} + p \phi + \frac{n-1}{4} \phi^2 \right).
\]
If $n=1$ the hypersurface reduces to a circle and so $\tilde{\mathcal W} = \{ (0,0) \}$,
i.e., $\mathcal W$ is empty.

We may factorise $J$ as above where
\[
{\bf c} = \left( - \,\frac{1}{2}(n + \sqrt{n}), 1 \right) \;\; \mbox{\rm and} \;\;
\theta = -\left(\frac{p}{\sqrt{n}} + \frac{ \sqrt{n} -1}{2} \,\phi \right).
\]
So we have a factorisation of $J$ {\em over the rationals}
if and only if $n$ is a perfect square. This is exactly the
condition singled out by Painlev\'e analysis of the Bryant system in
\cite{BdP}.

\smallskip

We begin with a simple example.

\begin{example}
The Bryant system with $n=1$ takes a particularly simple form, as
we only have the zero vector in $\tilde{\mathcal W}$.

Now $J = -p (p + \phi)$, and we obtain a factorisation
with ${\bf c} = (-1, 1)$ or $(0,1)$. Choosing the latter
we have $\theta = p + \phi$.
We can now start the recursion with $F_{\bf c} = \theta$,
$\psi_{\bf c} =1$. Since
\[
\psi_{\bf c} + {\bf d}. \nabla F_{\bf c} = 1 + (1,-2)\cdot(1,1) =0
\]
we have a full solution to the recursion with all other terms zero.
The conserved quantity
is
\[
F = (p + \phi) e^u.
\]
In the variables of the discussion of the Bryant system in
\cite{Cetc} (chapter 1, \S 4.1) this is just $x+y$.
\end{example}

In \cite{DW1} (see \S 5), in certain situations with two weight vectors $v$
and $w$ we obtained some nontrivial conserved quantities as follows. We
were able to write $F_{\bf c}$ in two different ways
\[
F_{\bf c} = J \Gamma_v + \tau \,\theta^{s^\prime}
= J \Gamma_w + \rho \,\theta^{s}
\]
where
\[
({\bf v}+{\bf d})\cdot \nabla \tau = ({\bf w}+{\bf d})\cdot \nabla \rho =0,
\]
\[
({\bf v}+{\bf d})\cdot \nabla \theta = -\frac{1}{s^\prime}, \;\;\;
({\bf w} +{\bf d})\cdot \nabla \theta = -\frac{1}{s}, \;\;\;
\]
and $\Gamma_v, \Gamma_w$ are constant.
One now has a solution to the recursion with
\[
F_{{\bf c} +{\bf z} + {\bf d}} = - A_z \,\Gamma_z, \;\;\;
\psi_{{\bf c} +{\bf z} + {\bf d}}=0, \;\;\; {\rm with} \; {\bf z}={\bf v},{\bf w}
\]
and all other $F_{\bf a}, \psi_{\bf a}$ zero.

In our situation, we have $v=0$.
Taking
$\theta = -\left(\frac{p}{\sqrt{n}} + \frac{ \sqrt{n} -1}{2} \,\phi \right)$,
as above, we have
\[
{\bf d} \cdot \nabla \theta =-1, \;\;\;
({\bf d}+ {\bf w}) \cdot \nabla \theta = -1 + \frac{1}{\sqrt{n}}
\]
We see that the latter term is minus the reciprocal of
an integer if and only if $n=4$, when it is $-\frac{1}{2}$.

\begin{example}  \label{bryant-integral}
In the $n=4$ case we have
\[
{\bf d} = (4, -2) \;\; : \;\;
{\bf c} = (-3, 1), \;\; : \;\;
\theta = -\frac{1}{2} (p + \phi)
\]
We may take $\tau = (p + 2 \phi)$, so that ${\bf d} \cdot\nabla \tau =0$.
We now have
\[
 - \theta^2 = -J + \tau \theta = -\frac{1}{4}
(p +  \phi)^2
\]
so the above conditions are satisfied with $\rho = -1,
\Gamma_0 = -1, \Gamma_w = 0$
and $s=2, s^{\prime}=1$.

We obtain a nontrivial solution to the recursion relations:
\[
\psi_{\bf c} = \frac{1}{2}(p + \phi), \;\;\;
F_{\bf c} = \theta \psi_{\bf c} =  -\frac{1}{4}
(p + \phi)^2,
\]
\[
\psi_{{\bf c} + {\bf d} + {\bf w}} = 0, \;\;\;
F_{{\bf c} + {\bf d} + {\bf w}} = 0,
\]
\[
\psi_{{\bf c} + {\bf d}} = 0, \;\;\;
F_{{\bf c} + {\bf d}} = E.
\]
The conserved quantity is now
\begin{equation} \label{conserved1}
F = -\frac{1}{4}(p+\phi)^2 e^{-3q + u} + E e^{q-u}.
\end{equation}

We will show in \S 6 that this generalized first integral
allows us to write down the Bryant soliton in dimension 5 explicitly.
\end{example}

\begin{rmk} \label{Darboux}
It is interesting to observe that in the $n=4$ case one may also
find a conserved quantity by the method of Darboux polynomials (see \cite{PrS}
and \cite{Go}). It is convenient to use the variables of \cite{Cetc}
(chapter 1, \S 4.1), $x = \dot{h}, y = n \dot{h} - h \dot{u}$. Using as
our independent variable $T$ defined by $h\, dT = dt$, \cite{Cetc} derives the
equations
\begin{eqnarray*}
x^{\prime} &=& x^2 - xy + n-1 \\
y^{\prime} &=& x(y - nx).
\end{eqnarray*}
Given a planar system
\[
x^{\prime} = P(x,y), \;\;\; y^{\prime} = Q(x,y)
\]
representing the flow associated to the vector field
$X = P \partial_x + Q \partial_y$, we call a polynomial $J$
a {\em Darboux polynomial} if $X(J) = gJ$ for some polynomial $J$.
Sufficiently many Darboux polynomials may be used to construct
conserved quantities.

For our system we always have a Darboux polynomial $J_1 = nx^2 -y^2 + n(n-1)$,
for which $X(J_1) = 2x J_1$. This is associated to the soliton
conservation law, which in these variables is
\[
nx^2 -y^2 +n(n-1) = C h^2.
\]
If $n=4$ there is a second Darboux polynomial, $J_2 = 2x^2 -xy + 3$, which
satisfies $X(J_2) = (4x-y) J_2$. This means that $R = \frac{\sqrt{J_1}}{J_2}$
satisfies
\[
X(R) = (-3x + y) R = - {\rm div} (X) R.
\]
So $R$ is an integrating factor in the sense that $(RQ)_y + (RP)_x=0$.
There is now an $I$ such that $I_x = RQ$ and $I_y = -RP$, and $I$
is a conserved quantity.
\end{rmk}
\section{\bf Superpotentials}

A {\em superpotential} for a Hamiltonian is a time-independent
solution to the Hamilton-Jacobi equations, i.e.,  a function
${\bf q} \mapsto f({\bf q})$ on configuration space that solves the equation
\begin{equation}
H({\bf q}, df({\bf q})) =0.
\end{equation}
As we shall describe in the next section, superpotentials are important
because they define subsystems of the full Hamiltonian system, which
are often more tractable than the full system.
In \cite{DW2}, \cite{DW6}, and \cite{DW7} we analysed the existence of
superpotentials in the Einstein case.

In the steady soliton case the above equation becomes:
\[
J(\nabla f, \nabla f)= e^{{\bf d} \cdot {\bf q}} \left( E
       +  \sum_{w \in {\mathcal W}} \, A_w e^{{\bf w} \cdot {\bf q}} \right)
       \]
where $\nabla f$ denotes the Euclidean gradient with respect to the variables
$\bf q$.
We look for superpotentials of the form
\begin{equation} \label{supertype}
f = \sum \, f_{\bf c} \,e^{{\bf c} \cdot {\bf q}}
\end{equation}
in which $f_{\bf c}$ are constant and the sum ranges over a finite set ${\mathcal C}$ of points ${\bf c}$ in $\R^{r+1}$.
This leads to
\begin{equation} \label{supereq}
\sum_{{\bf a}+{\bf c} = {\bf  b}} J({\bf a},{\bf c})\, f_{\bf a}\,\, f_{\bf c} =\left\{ \begin{array}{ll}
 A_w & \mbox{if ${\bf b}={\bf d} + {\bf w}$ for some $w \in {\mathcal W}$} \\
 E  & \mbox{if ${\bf b}= {\bf d}$} \\
  0  & \mbox{otherwise}. \end{array} \right.
\end{equation}

The following lemma is often useful.
\begin{lemma} \label{vw}
Let $\bf{v}, \bf{w}$ be vectors in $\R^{r+1}$ with $v_{r+1} = w_{r+1} =0$.
Then
\begin{equation}
J({\bf v}+{\bf d}, {\bf w}+{\bf d}) = 1 -\sum_{i=1}^{r} \, \frac{v_i w_i}{d_i}.
\end{equation}
\end{lemma}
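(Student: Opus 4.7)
The plan is a direct substitution into the polarised form of $J$ given by equation (\ref{bilnform}), followed by careful expansion. This is essentially a routine bilinear algebra calculation; the only mild subtlety is keeping track of the last coordinate of $\mathbf{d}$ (which is $-2$) and using $\sum_{i=1}^r d_i = n$.

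First I would write out $(\mathbf{v}+\mathbf{d})$ and $(\mathbf{w}+\mathbf{d})$ explicitly: their first $r$ components are $v_i + d_i$ and $w_i + d_i$ respectively, while both have last component equal to $-2$ (since $v_{r+1}=w_{r+1}=0$). Substituting these into the bilinear form (\ref{bilnform}) with $(p_i,\phi) = (v_i+d_i,-2)$ and $(p_i',\phi') = (w_i+d_i,-2)$ gives
\[
J(\mathbf{v}+\mathbf{d},\mathbf{w}+\mathbf{d}) = -\left(\sum_{i=1}^r \frac{(v_i+d_i)(w_i+d_i)}{d_i} - \sum_{i=1}^r (w_i+d_i) - \sum_{i=1}^r (v_i+d_i) + (n-1)\right),
\]
where I have already simplified $\frac{\phi}{2}=-1$, $\frac{\phi'}{2}=-1$, and $\frac{n-1}{4}\phi\phi' = n-1$.

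Next I would expand the first sum as
\[
\sum_i \frac{v_iw_i}{d_i} + \sum_i v_i + \sum_i w_i + \sum_i d_i = \sum_i \frac{v_iw_i}{d_i} + \sum_i v_i + \sum_i w_i + n.
\]
The linear terms in $\sum_i v_i$ and $\sum_i w_i$ cancel against those coming from $-\sum_i(v_i+d_i)$ and $-\sum_i(w_i+d_i)$, and the constants combine as $n - n - n + (n-1) = -1$. What remains is exactly $-\left(\sum_i v_iw_i/d_i - 1\right) = 1 - \sum_{i=1}^r v_iw_i/d_i$, as claimed.

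No real obstacle is anticipated: the identity is an algebraic consequence of the explicit form of the Lorentz quadratic form $J$ and the choice of the extended vector $\mathbf{d}$. The only thing to be careful about is not conflating $n$ (the dimension of the principal orbit, equal to $\sum d_i$) with $r+1$ (the length of the extended vectors), and remembering that the coefficient of $\phi^2$ in $J$ is $-(n-1)/4$, not $-(r-1)/4$.
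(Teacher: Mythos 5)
Your computation is correct and is essentially identical to the paper's own proof: both substitute $(v_i+d_i,\,-2)$ and $(w_i+d_i,\,-2)$ into the bilinear form (\ref{bilnform}), expand, and use $\sum_i d_i = n$ so the constants collapse to $-1$. Nothing further is needed.
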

\begin{proof} By (\ref{bilnform}),
\begin{eqnarray*}
J({\bf d}+{\bf v}, \bf{d}+{\bf w}) &=& -\left( \sum_{i=1}^{r} \frac{(d_i + v_i)(d_i + w_i)}{d_i}
 - \sum_{i=1}^{r} (d_i + v_i) - \sum_{i=1}^{r} (d_i + w_i) + \left(\frac{n-1}{4}\right)
 (-2)^2 \right) \\
            &=& -\left( \sum_{i=1}^{r} \left(\frac{v_i w_i}{d_i} - d_i \right) + n-1 \right)\\
            &=& 1 - \sum_{i=1}^{r} \frac{v_i w_i}{d_i}.
\end{eqnarray*}
\end{proof}

We can deduce from (\ref{supereq}) the following result, whose
proof is essentially the same to that in the Einstein case
(see Propositions 2.2-2.6 of \cite{DW2}). We use ${\rm conv}(S)$ to
denote the convex hull of a set $S$. As mentioned earlier, we
will identify $\tilde{\mathcal W}$ with a set in $\R^{r+1}$ by associating
to each weight vector $w$ the extended vector ${\bf w} = (w,0)$
in $\R^{r+1}$.

\begin{lemma} \label{portmanteau}
$($i$)$ The convex hull of $\mathcal C$ contains the convex hull of
$\frac{1}{2} ({\bf d} +  \tilde{\mathcal W})$.

\smallskip 
$($ii$)$ If ${\bf a},{\bf c} \in {\mathcal C}$ and ${\bf
  a}+ {\bf c}$ cannot be written as the sum of two elements of
$\mathcal C$ distinct from ${\bf a},{\bf c}$, then either $J({\bf
  a},{\bf c})=0$ or ${\bf a}+{\bf c} \in {\bf d} + \tilde{\mathcal W}$. In particular,
if $\bf c$ is a vertex of conv$({\mathcal C})$, then either $\bf c$ is
$J$-null or $2{\bf c} = {\bf d}+{\bf w}$ for some $w \in \tilde{\mathcal W}$ and
$J({\bf c},{\bf c})\,f_{\bf c}^2 = A_w$.

\medskip
If we further assume that no vertex of conv$({\mathcal C})$ is $J$-null, then we
also have:

\medskip
$($iii$)$ The convex hull of $\mathcal C$ equals the convex hull of
$\frac{1}{2} ({\bf d} +  \tilde{\mathcal W})$. In particular
every element of $\mathcal C$ is of the form ${\bf c} = 
\frac{1}{2}({\bf d} + {\bf x})$
for ${\bf x} \in \R^{r+1}$ with $-1 \leq \sum x_i \leq 0$. Moreover,
$\sum x_i =-1$ if and only if $\bf x$ is a convex linear combination of nonzero
elements of $\tilde{\mathcal W}$ $($ i.e. of elements of $\mathcal W$ $)$.
Also $\sum x_i =0$ if and only if ${\bf x}=0$, that is, 
${\bf c} = \frac{1}{2} {\bf d}$.

\smallskip
$($iv$)$ If $\bf w$ is a vertex of conv$(\tilde{\mathcal W})$ 
then ${\bf w} +{\bf d} = 2 {\bf c}$
for some vertex $\bf c$ of conv$({\mathcal C})$. Moreover 
$J({\bf d} +{\bf w},{\bf d}+ {\bf w})$
has the same sign as $A_w$.    \flushright{\qed}
\end{lemma}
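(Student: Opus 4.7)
The plan is to adapt the strategy of Propositions 2.2--2.6 of \cite{DW2} to the soliton setting, exploiting the quadratic recursion (\ref{supereq}) together with elementary convex geometry. For part (i), I would take a vertex ${\bf b}$ of $\conv({\bf d} + \tilde{\mathcal W})$, so ${\bf b} = {\bf d} + {\bf w}$ for some vertex ${\bf w}$ of $\conv(\tilde{\mathcal W})$. The right-hand side of (\ref{supereq}) is then $A_w$ (or $E$, when ${\bf w}=0$), which is nonzero by hypothesis. Hence some pair $({\bf a}, {\bf c}) \in {\mathcal C} \times {\mathcal C}$ satisfies ${\bf a} + {\bf c} = {\bf b}$, so $\tfrac{1}{2}{\bf b}$ is the midpoint of two elements of ${\mathcal C}$ and therefore lies in $\conv({\mathcal C})$; varying ${\bf w}$ yields (i).

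For part (ii), suppose ${\bf a}, {\bf c} \in {\mathcal C}$ with ${\bf b} := {\bf a}+{\bf c}$ admitting no further decomposition in ${\mathcal C} + {\mathcal C}$. Then (\ref{supereq}) at this ${\bf b}$ reduces to a single term, namely $2 J({\bf a}, {\bf c}) f_{\bf a} f_{\bf c}$, or $J({\bf a}, {\bf a}) f_{\bf a}^2$ when ${\bf a}={\bf c}$, so if ${\bf b} \notin {\bf d} + \tilde{\mathcal W}$ the right-hand side vanishes and $J({\bf a}, {\bf c}) = 0$. When ${\bf c}$ is a vertex of $\conv({\mathcal C})$, any decomposition $2{\bf c} = {\bf c}_1 + {\bf c}_2$ with ${\bf c}_i \in {\mathcal C}$ forces ${\bf c}_1 = {\bf c}_2 = {\bf c}$ by the vertex property, so the preceding applies with ${\bf a} = {\bf c}$; in the non-null case one reads off $J({\bf c}, {\bf c}) f_{\bf c}^2 = A_w$ directly from (\ref{supereq}).

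For part (iii), the non-null hypothesis combined with (ii) forces every vertex of $\conv({\mathcal C})$ to lie in $\tfrac{1}{2}({\bf d} + \tilde{\mathcal W})$, which together with (i) gives equality of the two convex hulls. To analyse $\sum_i x_i$ for ${\bf c} = \tfrac{1}{2}({\bf d} + {\bf x}) \in {\mathcal C}$, I would write ${\bf x} = \alpha_0 \cdot 0 + \sum_{w \in {\mathcal W}} \alpha_w {\bf w}$ as a convex combination of elements of $\tilde{\mathcal W}$ and invoke the scaling identity $\sum_{i=1}^{r} w_i = -1$ for every $w \in {\mathcal W}$. This identity is a consequence of the homogeneity of scalar curvature: under $q_i \mapsto q_i + s$ for all $i$ the metric scales by $e^s$ and hence $S \mapsto e^{-s} S$, so $\sum_w A_w e^{w \cdot q} e^{s \sum w_i} = e^{-s} \sum_w A_w e^{w \cdot q}$ forces $\sum_i w_i = -1$. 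The extended ${\bf w} = (w, 0)$ satisfies the same identity in $\R^{r+1}$, yielding $\sum_i x_i = -\sum_w \alpha_w = \alpha_0 - 1 \in [-1, 0]$; the endpoints correspond exactly to $\alpha_0 = 0$ (${\bf x}$ is a convex combination of elements of ${\mathcal W}$) and $\alpha_0 = 1$ (${\bf x} = 0$).

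For part (iv), if ${\bf w}$ is a vertex of $\conv(\tilde{\mathcal W})$, then by affine equivariance $\tfrac{1}{2}({\bf d} + {\bf w})$ is a vertex of $\conv(\tfrac{1}{2}({\bf d} + \tilde{\mathcal W})) = \conv({\mathcal C})$ and hence belongs to ${\mathcal C}$. Applying (ii) yields $J({\bf c}, {\bf c}) f_{\bf c}^2 = A_w$, so $J({\bf d} + {\bf w}, {\bf d} + {\bf w}) = 4 J({\bf c}, {\bf c})$ shares the sign of $A_w$. The one subtle input in the entire argument is the scaling identity $\sum_{i=1}^{r} w_i = -1$ driving the boundary analysis in (iii); everything else is a clean interplay between (\ref{supereq}) and the combinatorics of Minkowski sums in $\R^{r+1}$.
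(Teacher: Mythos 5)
Your proposal is correct and follows essentially the same route as the paper, which offers no independent argument and simply defers to Propositions 2.2--2.6 of \cite{DW2}: the convolution identity (\ref{supereq}) evaluated at extreme points of the Minkowski sum ${\mathcal C}+{\mathcal C}$, together with the scaling identity $\sum_{i=1}^r w_i=-1$ for the weight vectors, are precisely the ingredients of that proof. The only point worth flagging is that your step ``nonzero by hypothesis'' in part (i) implicitly requires $E\neq 0$ for the vertex $0\in\tilde{\mathcal W}$ to force a decomposition of ${\bf d}$, but this is an ambiguity of the lemma as stated (and of the convention that $E$ plays the role of $A_0$) rather than a defect of your argument.
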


If we make the non-null vertex assumption in the second half of the above
Lemma, we can deduce a non-existence result for superpotentials, using
similar arguments to the result in the Einstein case for nonzero
cosmological constant (cf Theorem 10.1 of \cite{DW2}).

\begin{prop} \label{nonnull}
If no vertex of conv$({\mathcal C})$ is $J$-null, then there
are no superpotentials of the form $($\ref{supertype}$)$.
\end{prop}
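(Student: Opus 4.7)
The plan is to pick one carefully chosen level of the recursion (\ref{supereq}) and derive a direct contradiction.

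Suppose a superpotential of the form (\ref{supertype}) exists. The non-null vertex hypothesis lets us invoke the stronger halves of Lemma \ref{portmanteau}: $\conv({\mathcal C}) = \conv\bigl(\frac{1}{2}({\bf d} + \tilde{\mathcal W})\bigr)$, every ${\bf c} \in {\mathcal C}$ has the form $\frac{1}{2}({\bf d}+{\bf x})$ with ${\bf x}\in\conv(\tilde{\mathcal W})$, and each vertex ${\bf w}$ of $\conv(\tilde{\mathcal W})$ yields a vertex $\frac{1}{2}({\bf d}+{\bf w})$ of $\conv({\mathcal C})$, which must belong to $\mathcal C$. Since every $w \in \mathcal W$ satisfies $\sum_i w_i = -1$ (reflecting the scale covariance $S(q + c\mathbf 1) = e^{-c}S(q)$ of the scalar curvature), while $0 \in \tilde{\mathcal W}$ has component-sum $0$, the elements of $\mathcal W$ and the point $0$ lie on parallel affine hyperplanes, so (whenever $\mathcal W \neq \emptyset$, the non-degenerate case of interest) at least one vertex ${\bf w}^*$ of $\conv(\tilde{\mathcal W})$ belongs to $\mathcal W$. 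Set ${\bf c}^* := \frac{1}{2}({\bf d}+{\bf w}^*)\in{\mathcal C}$; by Lemma \ref{portmanteau}(ii), $J({\bf c}^*,{\bf c}^*)\, f_{{\bf c}^*}^2 = A_{w^*}$, and since $A_{w^*}\neq 0$ and $J({\bf c}^*,{\bf c}^*)\neq 0$ we conclude $f_{{\bf c}^*}\neq 0$.

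The crux is to read off (\ref{supereq}) at the level ${\bf b}={\bf d}+2{\bf w}^*$. Any pair ${\bf a}, {\bf c}\in\mathcal C$ with ${\bf a}+{\bf c}={\bf b}$, once written as ${\bf a}=\frac{1}{2}({\bf d}+{\bf x})$ and ${\bf c}=\frac{1}{2}({\bf d}+{\bf y})$, satisfies ${\bf w}^* = \frac{1}{2}({\bf x}+{\bf y})$ with ${\bf x},{\bf y}\in\conv(\tilde{\mathcal W})$. Because ${\bf w}^*$ is an extreme point of that convex set, this forces ${\bf x}={\bf y}={\bf w}^*$, so $({\bf c}^*,{\bf c}^*)$ is the unique contributing pair, and the left-hand side of (\ref{supereq}) reduces to $J({\bf c}^*,{\bf c}^*)\, f_{{\bf c}^*}^2 = A_{w^*}\neq 0$.

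For the right-hand side to be nonzero at this level one would need $2{\bf w}^*\in\tilde{\mathcal W}$. But $\sum_i (2{\bf w}^*)_i = -2$, incompatible both with the value $-1$ characterising elements of $\mathcal W$ and with the value $0$ for $0\in\tilde{\mathcal W}$; hence the right-hand side vanishes, giving $A_{w^*}=0$, the desired contradiction. The principal step is the midpoint/extreme-point argument of the preceding paragraph, which makes essential use of the full conclusion of Lemma \ref{portmanteau}(iii) and hence of the non-null vertex hypothesis; once it is in place, the level ${\bf d} + 2{\bf w}^*$ sits just outside the support of the right-hand side of (\ref{supereq}) and the contradiction is immediate.
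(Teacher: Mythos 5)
Your first two paragraphs are fine, but the ``crux'' step rests on an arithmetic slip that destroys the argument. With your own definition ${\bf c}^* = \frac{1}{2}({\bf d}+{\bf w}^*)$ you have ${\bf c}^*+{\bf c}^* = {\bf d}+{\bf w}^*$, not ${\bf d}+2{\bf w}^*$. So the pair $({\bf c}^*,{\bf c}^*)$ contributes at the level ${\bf b}={\bf d}+{\bf w}^*$, where the right-hand side of (\ref{supereq}) equals $A_{w^*}$ and the resulting equation is precisely the consistent relation $J({\bf c}^*,{\bf c}^*)f_{{\bf c}^*}^2=A_{w^*}$ that you had already recorded from Lemma \ref{portmanteau}(ii) --- no contradiction. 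At your chosen level ${\bf b}={\bf d}+2{\bf w}^*$, a pair ${\bf a}+{\bf c}={\bf b}$ with ${\bf a}=\frac{1}{2}({\bf d}+{\bf x})$, ${\bf c}=\frac{1}{2}({\bf d}+{\bf y})$ forces $\frac{1}{2}({\bf x}+{\bf y})=2{\bf w}^*$, not ${\bf w}^*$; since $2{\bf w}^*$ has component sum $-2$, which lies outside $[-1,0]$, it is not even in $\conv(\tilde{\mathcal W})$, so the left-hand side is an empty sum and both sides vanish. Either way no contradiction appears. Indeed none \emph{can} be extracted from vertex levels alone: in the Ricci-flat Einstein case superpotentials with no $J$-null vertices do exist, and everything in your first two paragraphs holds verbatim in that setting.

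The ingredient you are missing is a \emph{cross term straddling the gap between the hyperplanes $\sum x_i=0$ and $\sum x_i=-1$}, which is how the paper argues. Take a vertex ${\bf w}$ of $\conv({\mathcal W})$; then ${\bf c}_0=\frac{1}{2}{\bf d}$ and ${\bf c}_1=\frac{1}{2}({\bf d}+{\bf w})$ are vertices of $\conv({\mathcal C})$ joined by an edge, and Lemma \ref{vw} gives $J({\bf c}_0,{\bf c}_0)=J({\bf c}_0,{\bf c}_1)=\frac{1}{4}$, so $J({\bf c}_0,\cdot)\equiv\frac{1}{4}$ on the whole edge by affineness. Let ${\bf c}'$ be the element of ${\mathcal C}$ on this edge, distinct from ${\bf c}_0$ and closest to it; then ${\bf c}_0+{\bf c}'$ admits no other decomposition into elements of ${\mathcal C}$, and $J({\bf c}_0,{\bf c}')=\frac{1}{4}\neq 0$, so Lemma \ref{portmanteau}(ii) forces ${\bf c}_0+{\bf c}'={\bf d}+\frac{t}{2}{\bf w}$ with $0<t\le 1$ to lie in ${\bf d}+\tilde{\mathcal W}$; but $\frac{t}{2}{\bf w}$ has component sum $-\frac{t}{2}\in[-\frac{1}{2},0)$, which is neither $0$ nor $-1$. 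That is the contradiction; your proposal never produces one.
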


\begin{proof}
Let $\bf w$ be a vertex of conv$({\mathcal W})$, so the line segment $0{\bf w}$
is an edge of conv$(\tilde{\mathcal W})$. The preceding lemma
\ref{portmanteau}
shows that ${\bf c}_0 = \frac{1}{2} {\bf d}$ and
${\bf c}_1 = \frac{1}{2} ({\bf d} + {\bf w})$ are vertices of conv$({\mathcal C})$
and ${\bf c}_0 {\bf c}_1$ is an edge of  conv$({\mathcal C})$.

Lemma \ref{vw}
shows that $J({\bf c}_0, {\bf c}_0) = \frac{1}{4} J({\bf d}, {\bf d}) = \frac{1}{4}$,
and also $J({\bf c}_0, {\bf c}_1) = \frac{1}{4} J({\bf d}, {\bf d} + {\bf w})
= \frac{1}{4}$. As $J({\bf c}_0, .)$ is an affine function on the 
edge ${\bf c}_0 {\bf c}_1$
it is therefore constant.

By considering the element of $\mathcal C$ on ${\bf c}_0 {\bf c}_1$ which is distinct
from ${\bf c}_0$ and closest to ${\bf c}_0$, and applying part (ii) of Lemma
\ref{portmanteau}, we obtain a contradiction.
\end{proof}

\begin{rmk}
It is interesting to note that in the Ricci-flat case there are several superpotentials
satisfying the non-null assumption (see the discussion
in \cite{DW2}, especially Theorem 6.1). So the above result is another manifestation
of the greater rigidity of the soliton equations compared to the Ricci-flat
Einstein equations.
\end{rmk}

\medskip
On the other hand, if we relax the non-null assumption,
we can find some examples of superpotentials in the steady case.

\begin{example} \label{bryant}
We first consider the situation of the Bryant soliton for which
$\tilde{\mathcal W} = \{ (0,0), (-1,0) \}$ if $n > 1$.

We observe using Lemma \ref{vw} that the only null vectors with zero second component
are $\pm(\sqrt{n},0)$. In particular if $n=4$ then we can take $\mathcal C$ to consist
of the vectors $\frac{1}{2} {\bf d}$ and $\frac{1}{2}({\bf d} + (-2,0))$, where ${\bf d} = (4, -2)$.
The first gives us ${\bf d} + (0,0)$ in the
expansion of $J(\nabla f, \nabla f)$, the cross term gives ${\bf d} + (-1,0)$,
and the last vector is null so does not contribute. We therefore obtain a superpotential
\begin{equation} \label{Bryantsuperpot}
f = 2 \sqrt{E}\, e^{2q_1 - u} + \frac{12}{\sqrt{E}}\,\,
e^{q_1 -u}
\end{equation}
for {\em positive} $E$.

When $n=1$, we  can obtain a superpotential in another way because the extended vectors
${\bf d} + (\pm 1, 0)$ are both $J$-null. We let $\mathcal C$  consist of the vectors
${\bf c}_1 = (1, -1)$ and ${\bf c}_2 = (0, -1)$. Note that the scalar curvature function is $0$, so
the parts of the superpotential equation involving ${\bf c}_i + {\bf c}_i$ automatically hold. The
single cross term leads to the condition $f_1 f_2 = E$. So if $E \neq 0$ we get the superpotential
$$ a\,e^{q_1 - u} + \frac{E}{a}\, e^{-u}, \,\,\, a \neq 0$$
while if $E=0$ we get for $a \neq 0$ we get the possibilities
$$f = a\, e^{q_1 - u},  \ \  \mbox{\rm and} \,\, f = a\, e^{-u}.$$
These will be referred to as the {\em limiting cases} of the previous superpotential.

\end{example}

\begin{example} \label{warped}
Next we consider the case of a warped product on two factors, so
that the metric is
\[
\bar{g}= dt^2 + h_1(t)^2 g_1 + h_2(t)^2 g_2
\]
where $ $($M_i, g_i$)$ $ is Einstein of dimension $d_i$ with positive Einstein
constant normalised to be $d_i(d_i - 1)$.

Now $r=2$ and $\tilde{\mathcal W} = \{ (0,0,0),(-1,0,0),(0,-1,0)\}$.
We pick ${\mathcal C}$ to consist of vectors
\begin{eqnarray*}
{\bf c}_1 &=& \frac{1}{2}({\bf d} + (-1,-1,0)), \\
{\bf c}_2 &=& \frac{1}{2}({\bf d} + (-1,1,0)), \\
{\bf c}_3 &=& \frac{1}{2}({\bf d} + (1,-1,0)).
\end{eqnarray*}
Now the cross terms give us the elements of $\tilde{\mathcal W}$, and $J({\bf c}_i,{\bf c}_j)$
is positive for $i \neq j$. If $E$ is positive, we may therefore
obtain a superpotential provided that ${\bf c}_1, {\bf c}_2, {\bf c}_3$ are null, which
translates into the condition $\frac{1}{d_1} + \frac{1}{d_2} =1$. Therefore, if
$(d_1,d_2)=(2,2)$, we obtain a  superpotential given by
$$ F = \sqrt{E}\, e^{\frac{1}{2}q_1 + \frac{3}{2}q_2 - u} + \sqrt{E}\, e^{\frac{3}{2}q_1 + \frac{1}{2} q_2 -u}
    +\frac{4}{\sqrt{E}}\,\, e^{\frac{1}{2} q_1 + \frac{1}{2} q_2 -u}.$$
\end{example}

\begin{rmk}
In the $n=4$ case of Example \ref{bryant}, and in Example \ref{warped}
the superpotential condition forces $E$ to be positive.
Recalling from the discussion preceding Proposition \ref{zeroE}
that in the steady case $E$ is just equal to minus the constant $C$, it
is interesting to observe that the condition $E > 0$ is also
a consequence of assuming completeness of the metric (cf. \cite{BDGW}).
\end{rmk}

\begin{example} \label{BBC}
This example involves the B\'erard-Bergery-Calabi ansatz, that is, we
take the hypersurface to be a circle bundle over a product of $r \geq 1$
Fano K\"ahler-Einstein manifolds of real dimension $d_i = 2m_i$. Thus
${\bf d}=(1, 2m_2, \cdots, 2m_r, -2).$

Now $\tilde{\mathcal W}$ consists of

(i) type III vectors with $1$ in the first place and $-2$ in the $i$th place
for $2 \leq i \leq r$,

(ii) type I vectors with $-1$ in $i$th place $(2 \leq i \leq r)$, and

(iii) the zero vector.

\smallskip
For $\mathcal C$, we take
\[
{\bf c}_1 = \frac{1}{2} ({\bf d} + (-1,0, \cdots,0)),
\]
${\bf c}_2, \ldots, {\bf c}_r$ to be $\frac{1}{2}({\bf d}+{\bf w})$
where ${\bf w}$ ranges over the the type III vectors, and
\[
{\bf c}_{r+1} = \frac{1}{2}({\bf d} + (1,0, \cdots, 0)).
\]
As $d_1 =1$, we see that ${\bf c}_1$ and ${\bf c}_{r+1}$ are null, while
${\bf c}_i : (2 \leq i \leq r)$ are mutually orthogonal, and also orthogonal
to ${\bf c}_{r+1}$. The cross terms ${\bf c}_1 + {\bf c}_i$ for $2 \leq i \leq r$
give the type I vectors in ${\bf d} + \tilde{\mathcal W}$,
while $2{\bf c}_i$ give the type III vectors and ${\bf c}_1 + {\bf c}_{r+1}$
gives the zero vector.

We let $A_i$ for $i=2, \ldots, r$ denote the constants $A_{\bf w}$
for the type III vectors, and $A_{r+i-1}$ denote the constants for the
corresponding type I vectors. We must take $f_i = \sqrt{-A_i d_i}$ for
$2 \leq i \leq r$ (possible since $A < 0$ for type III vectors).
We see finally that we obtain a superpotential
provided $A_{r+i-1}/\sqrt{-A_i d_i}$ is the same for each $2 \leq i \leq r$.

If we normalize the K\"ahler-Einstein metric on $M_i$ so that its K\"ahler class
equals $2\pi \alpha_i$ where $\alpha_i$ is the indivisible integral cohomology class
so that the first Chern class of $M_i$ is $\kappa_i \alpha_i$ with $\kappa_i > 0$,
then $A_{r+i-1} = d_i \kappa_i$. Also, suppose that the Euler class of the circle bundle
is $b_2 \alpha_2 + \cdots + b_r \alpha_r$. Then it follows that
$A_i = -\frac{1}{4}d_i b_i^2$ for $2 \leq i \leq r$. The conditions guaranteeing a
superpotential in the previous paragraph translate into requiring $\kappa_i/|b_i|$
to be independent of $i$. Note that this is precisely the condition under which
an explicit, complete, steady K\"ahler Ricci soliton was constructed in \cite{DW5}
(see Theorem 4.20(i) in that reference).

Note furthermore that unlike the previous examples, there is no constraint on the
sign of $E$ here.
\end{example}

\section{\bf Explicit Non-K\"ahler Steady Solitons}

One reason for the importance of superpotentials is that they give rise to a Lagrangian
section of the cotangent bundle of configuration space that is invariant under the
Hamiltonian flow. Using this section to pull back the Hamiltonian vector field to
configuration space, one obtains a first order subsystem of the canonical equations
(see \S 1 of \cite{DW2} for details). For the Hamiltonian system associated to the
cohomogeneity one gradient Ricci soliton equation, the first order subsystem is given by
\begin{equation} \label{subsystem}
 \dot{\bf q} = 2 {\bf v}^{-1} J \nabla f
\end{equation}
where ${\bf v}$ is the extended relative volume $e^{\frac{1}{2} {\bf d} \cdot {\bf q}} = v e^{-u}$
and $f$ is the superpotential.

In the following we shall consider the first order subsystems for Examples \ref{bryant}
and \ref{warped}, and show that they lead to explicit solutions of the soliton equation.
Explicit solutions arising from the superpotentials in Example \ref{BBC} were discussed
in \S 4 of \cite{DW5}.

\medskip

For Example \ref{bryant}  with $n=4$ we have
$$ J = - \left( \begin{array}{rr}
               \frac{1}{4}  &   \frac{1}{2} \\
               \frac{1}{2}  &   \frac{3}{4}  \end{array} \right), $$
so Eq. (\ref{subsystem}) and the superpotential constructed in that example yield
the system
\begin{eqnarray*}
\dot{q_1} &=& \frac{6}{\sqrt{E}} \,\,e^{-q_1}   \\
\dot{u}  &=& -\sqrt{E} + \frac{6}{\sqrt{E}} \,\,e^{-q_1}.
\end{eqnarray*}
Since $h(t)^2 = e^{q_1}$, we have $\frac{d}{dt}\, h^2 = \frac{6}{\sqrt{E}}$, and so
\begin{eqnarray*}
     h(t) &=& \frac{\sqrt{6}}{E^{1/4}} \sqrt{t + t_0} \\
     u(t) &=& -\sqrt{E}\, t + \log (t+t_0) + \,\mbox{\rm const}
\end{eqnarray*}
where $t_0$ is a constant.

Notice that there is no smooth soliton among this one-parameter family of solutions.
Assuming that we place the singular orbit at $t=0$, the smoothness conditions require
$h(0)=0$, so that $t_0 = 0$. But we also need $\dot{h}(0) = 1$, which is never
satisfied. However, the solitons are complete at infinity.

\begin{rmk} \label{conicalBryant}
The above family of explicit singular solitons was first constructed in
\cite{ACG} (see Proposition 2.2 and Remark 2.6). They belong to a family of such
solitons which occur in all dimensions $\geq 3$. For general dimensions the solitons
were constructed by dynamical systems methods. The existence of a superpotential help
to explain why dimension $5$ is special.
\end{rmk}

In the $n=1$ case of Example \ref{bryant} one easily checks that the first superpotential
gives rise to the first order system
\begin{eqnarray*}
\dot{q_1}  &=&  -a e^{\frac{1}{2}q_1} + \left(\frac{E}{a}\,\,\right) e^{-\frac{1}{2} q_1}  \\
\dot{u}  &=& -a e^{\frac{1}{2}q_1}.
\end{eqnarray*}

Note that the first equation above can be expressed in terms of $h$ as
\begin{equation} \label{spODE}
\dot{h} = \frac{a}{2} \left( \frac{E}{a^2} - h^2 \right),
\end{equation}
which is precisely of the form of Eq. (1.42) in Chapter I of \cite{Cetc}. The point is
that this equation follows immediately from the existence of a superpotential and
is not the consequence of ad hoc derivations. We will skip the detailed analysis of
the associated first order system since $2$-dimensional gradient Ricci solitons,
singular or otherwise, have been classified in detail in \cite{BeMe}. However, we do
want to point out the solutions of the first order system yield {\em all} the families
of steady solitons given in \cite{BeMe}.  In other words, all $2$-dimensional steady
gradient Ricci solitons, singular or not, have an associated conserved quantity that is
linear in momentum.

We also note that the $h$ identically constant case (necessarily with $E > 0$)
gives the cylinder, and the $1$-parameter family of cigar solitons (necessarily with $E>0$)
can be characterised as members of a $2$-parameter family of generically singular solitons
which also satisfy the smoothness condition $E=2a$.

The two limiting cases of the superpotential give rise respectively to singular solitons
which are the limits (as $E/a$ tends to $0$) of the family of ``exploding solitons", and
a family of flat cone solutions, which include Euclidean space as a special case.

\medskip

In Example \ref{warped}, we found a superpotential when $d_1 = d_2 = 2$. The
matrix for $J$ is therefore
$$ J = - \left(\begin{array}{rrr}
         \frac{1}{2}  &   0          &   \frac{1}{2}  \\
         0            & \frac{1}{2}  &  \frac{1}{2}   \\
         \frac{1}{2}  &  \frac{1}{2} &  \frac{3}{4}
         \end{array} \right).$$
Hence with $h_1^2= e^{q_1}$ and $h_2^2 = e^{q_2}$,  the associated first order subsystem is
\begin{eqnarray}
       \dot{q_1} &=& \frac{\sqrt{E}}{2}\, e^{-\frac{1}{2}q_1 + \frac{1}{2} q_2}
              - \frac{\sqrt{E}}{2}\, e^{\frac{1}{2}q_1 - \frac{1}{2} q_2}
              + \frac{2}{\sqrt{E}}\, e^{-\frac{1}{2}q_1 - \frac{1}{2} q_2}  \label{eq1} \\
       \dot{q_2} &=& -\frac{\sqrt{E}}{2}\, e^{-\frac{1}{2}q_1 + \frac{1}{2} q_2}
              + \frac{\sqrt{E}}{2}\, e^{\frac{1}{2}q_1 - \frac{1}{2} q_2}
              + \frac{2}{\sqrt{E}}\, e^{-\frac{1}{2}q_1 - \frac{1}{2} q_2} \label{eq2}  \\
       \dot{u} &=&    -\frac{\sqrt{E}}{2}\, e^{-\frac{1}{2}q_1 + \frac{1}{2} q_2}
              - \frac{\sqrt{E}}{2}\, e^{\frac{1}{2}q_1 - \frac{1}{2} q_2}
              + \frac{2}{\sqrt{E}}\, e^{-\frac{1}{2}q_1 - \frac{1}{2} q_2}.  \label{eq3}
\end{eqnarray}

The first two equations imply that
$$ \tr \,L = 2\left( \frac{\dot{h_1}}{h_1} + \frac{\dot{h_2}}{h_2}\right)
         = \frac{4}{\sqrt{E}}\, \frac{1}{h_1 h_2}.$$
So $h_1 h_2 = \frac{2}{\sqrt{E}} (t + t_0)$ for some constant $t_0$. Using this relation
in Eq. (\ref{eq1}), after some simplification, we obtain
$$ \frac{d}{dt}(h_1^2) = (t+t_0) - \frac{E}{4(t+t_0)}\, h_1^4 + \frac{h_1^2}{t+ t_0}.$$
Let us set $\beta:= \frac{h_1^2}{t+t_0}.$ Then $\beta$ satisfies
\begin{equation} \label{integral}
      4 \dot{\beta} = 4 - E \beta^2,
\end{equation}
which again is analogous to Eq. (\ref{spODE}).

A special solution of this equation is $\beta = \pm \frac{2}{\sqrt{E}}$, which gives
$$ h_1(t)^2 = \pm \frac{2}{\sqrt{E}} (t+ t_0).$$
Note that the corresponding metrics cannot be smooth.

If we take $\sqrt{E} \beta < 2$, we obtain
$$ \beta = \left(\frac{2}{\sqrt{E}}\right) \, \frac{e^{\sqrt{E}(t + t_1)} - 1}{e^{\sqrt{E}(t + t_1)} + 1},$$
where $t_1$ is a constant. This in turn yields
$$ h_1(t)^2 = \left(\frac{2}{\sqrt{E}}\right)\, (t+t_0)\,   \frac{e^{\sqrt{E}(t + t_1)} - 1}{e^{\sqrt{E}(t + t_1)} + 1},$$
$$ h_2(t)^2 = \left(\frac{2}{\sqrt{E}}\right)\, (t+t_0)\,   \frac{e^{\sqrt{E}(t + t_1)} + 1}{e^{\sqrt{E}(t + t_1)} - 1},$$
and
$$\dot{u} = -\sqrt{E} + \frac{1}{t+t_0} - \frac{2\sqrt{E}}{e^{2\sqrt{E}(t + t_1)} - 1}.$$

Let us now look for complete smooth solutions within this $3$-parameter family of solutions.
We need to have $h_1(0)= 0, \dot{h_1}(0) = 1, \dot{u}(0) = 0$ and $h_2(0) > 0, \dot{h_2}(0)=0.$
Hence $t_0=t_1 = 0$, and one easily checks that these choices imply that all smoothness
conditions at $t=0$ (the position of the zero section) hold. We therefore obtain the
following $1$-parameter family of explicit solutions

$$h_1(t) = \frac{\sqrt{2t}}{E^{1/4}}\, 
\left(\tanh \left( \frac{t \sqrt{E}}{2} \right) \right)^{1/2}, \,\,\,\,
    h_2(t) = \frac{\sqrt{2t}}{E^{1/4}}\, \left(\coth \left( \frac{t \sqrt{E}}{2} 
\right) \right)^{1/2}, $$

$$ u(t) = \log \left( \frac{t \sinh (\sqrt{E})}{\sinh (\sqrt{E} t)} \right) + \, \mbox{\rm const}.$$

It follows that the mean curvature of the hypersurfaces is $\tr \, L = \frac{2}{t}$ and the
scalar curvature of the soliton metric is
$$ \bar{R} = \frac{2 \sqrt{E}}{t}  - \frac{1}{t^2}
      - \frac{4 \sqrt{E}}{e^{2 \sqrt{E} t} -1} \left(\sqrt{E} - \frac{1}{t} \right)
       - \frac{4E}{(e^{2 \sqrt{E} t } -1)^2} .$$

Similarly, we can analyse solutions for which $2 < \sqrt{E} \beta$ holds. 
Now we have 
$$ h_1(t) = \frac{\sqrt{2}}{E^{1/4}} \sqrt{t+t_0} \left(\coth (\frac{\sqrt{E}}{2} (t+t_1)) \right)^{1/2}$$
and we obtain smooth complete solutions in which the $h_1, h_2$
in the case treated above are swapped.

\medskip
Finally, we will use the generalized first integral (\ref{conserved1}) we found
for the Bryant soliton system (Example \ref{bryant-integral}) in dimension 5
to obtain an explicit expression for the Bryant soliton in that dimension.

Using the Legendre transformation formulae of \S 3, we can rewrite (\ref{conserved1}) as
\begin{equation} \label{conserved2}
F = e^{-u} h^2 \left(E - \left( \frac{2 \dot{h}}{h} - \dot{u} \right)^2  \right).
\end{equation}
We note first that the conservation law $F=\mu$ can be rewritten in terms of a 
new variable $\beta := -\log (\frac{e^u}{h^2})$ as
\[
(\dot{\beta})^2 = E - \mu e^{-\beta}
\]
which can be integrated explicitly.
The Hamiltonian constraint becomes
\[
-(\dot{\beta})^2  - 4 \,\frac{\dot{\beta} \dot{h}}{h} + E + \frac{12}{h^2} =0.
\]
Setting $\alpha = h^2$ we have the linear equation in $\alpha$:
\[
2 \dot{\beta} \dot{\alpha} - \mu e^{-\beta} \alpha =12
\]
so the system has been integrated by quadratures. We recover the
soliton potential via
\[
u = -\beta + \log \alpha.
\]

In order to obtain the Bryant soliton, we assume $\mu < 0$ and choose
$$ \dot{\beta} = \sqrt{E - \mu e^{-\beta}}. $$
Upon integration we get
$$ \beta = - \log \left(\frac{E}{-\mu}\left( \coth^2(\frac{\sqrt{E}}{2} (t+ t_0)) -1 \right)   \right) 
$$
where $t_0$ is a constant. Substituting this into the first order equation for $\alpha$ results in
$$ \alpha = h(t)^2 = \frac{6}{\sqrt{E}}\left((t+t_0 + t_1) \coth(\frac{\sqrt{E}}{2} (t+ t_0) ) - \frac{2}{\sqrt{E}}  \right)$$
for some integration constant $t_1$. 

The smoothness conditions $h(0)=0$ and $\dot{h}(0) = 1$ are then easily seen to be
satisfied if we choose $t_0 = t_1 = 0$. The soliton potential is given (up to an 
additive constant) by
$$ u = \log \left( \left( \coth^2(\frac{\sqrt{E}}{2} t)  -1  \right) 
\left( t \coth(\frac{\sqrt{E}}{2} t) - \frac{2}{\sqrt{E}}  \right) \right).$$

\begin{rmk}  \label{singBryant}
$($a$)$ If we instead choose $t_0 = 0$ and $t_1 > 0$ in the above, we obtain a $1$-parameter
family of solutions which are complete at $\infty$ but which blow up like $\frac{1}{t}$
at the origin.

$($b$)$ If we choose $\mu$ to be $0$ in the conservation law above, we recover
the steady solitons with a conical singularity at $t=0$ associated with 
the superpotential (\ref{Bryantsuperpot}).

$($c$)$ We can also take $\mu$ to be positive. In this case we obtain a $1$-parameter
family of solutions which are complete at $\infty$ such that $h(0)=0$ and $\dot{h}(0) = +\infty$.
In fact 
$$ h(t)^2 = \alpha = \frac{6}{\sqrt{E}}\left((t+t_0) \tanh(\frac{\sqrt{E}}{2} (t+ t_0) ) - \frac{2}{\sqrt{E}}  \right)$$
where $t_0(E) > 0$ is the unique positive solution of 
$\frac{\sqrt{E}}{2} t_0 \tanh (\frac{\sqrt{E}}{2} t_0 ) = 1.$

$($d$)$ We certainly expect that singular solitons with the properties of (a) and (c)
above exist in all dimensions. As mentioned before (see Remark \ref{conicalBryant}) ones with
properties in (b) were found in \cite{ACG}.
\end{rmk}

\end{document}